\newcommand{\arcsinh}{\operatorname{arcsinh}}
\newcommand{\Rr}{{\mathbb{R}}}
\newcommand{\Tt}{{\mathbb{T}}}
\newcommand{\p}{\partial}
\newcommand{\epsi}{\varepsilon}
\def\leq{\leqslant}
\def\geq{\geqslant}
\numberwithin{equation}{section}
\newtheoremstyle{thmlemcorr}{10pt}{10pt}{\itshape}{}{\bfseries}{.}{10pt}{{\thmname{#1}\thmnumber{
#2}\thmnote{ (#3)}}}
\newtheoremstyle{thmlemcorr*}{10pt}{10pt}{\itshape}{}{\bfseries}{.}\newline{{\thmname{#1}\thmnumber{
\newtheoremstyle{defi}{10pt}{10pt}{\itshape}{}{\bfseries}{.}{10pt}{{\thmname{#1}\thmnumber{
#2}\thmnote{ (#3)}}}
\newtheoremstyle{remexample}{10pt}{10pt}{}{}{\bfseries}{.}{10pt}{{\thmname{#1}\thmnumber{
#2}\thmnote{ (#3)}}}
\newtheoremstyle{ass}{10pt}{10pt}{}{}{\bfseries}{.}{10pt}{{\thmname{#1}\thmnumber{
A#2}\thmnote{ (#3)}}}
\theoremstyle{thmlemcorr}
\newtheorem{theorem}{Theorem}
\numberwithin{theorem}{section}
\newtheorem{lemma}[theorem]{Lemma}
\theoremstyle{thmlemcorr*}
\newtheorem{theorem*}{Theorem}
\newtheorem{lemma*}[theorem]{Lemma}
\newtheorem{corollary*}[theorem]{Corollary}
\newtheorem{proposition*}[theorem]{Proposition}
\newtheorem{problem*}[theorem]{Problem}
\newtheorem{conjecture*}[theorem]{Conjecture}
\theoremstyle{defi}
\theoremstyle{remexample}
\newtheorem{remark}[theorem]{Remark}
\newtheorem{lem}[theorem]{Lemma}
\newtheorem{pro}[theorem]{Proposition}
\theoremstyle{ass}
\begin{document}

\title{One-dimensional forward-forward mean-field games}
 
\author{Diogo A.
        Gomes}         \address[D. A. Gomes]{
        King Abdullah University of Science and Technology (KAUST), CEMSE Division , Thuwal 23955-6900. Saudi Arabia.}

\author{Levon Nurbekyan}

\address[Levon Nurbekyan]{
        King Abdullah University of Science and Technology (KAUST), CEMSE
        Division , Thuwal 23955-6900. Saudi Arabia.}

\author{Marc Sedjro}

\address[Marc Sedjro]{
        King Abdullah University of Science and Technology (KAUST), CEMSE
        Division , Thuwal 23955-6900. Saudi Arabia.}

\keywords{Mean-field games; systems of conservation laws; convergence to equilibrium; Hamilton-Jacobi equations; transport equations; Fokker-Planck equations}
\subjclass[2010]{} 

\thanks{
        The authors were supported by KAUST baseline and start-up funds. 
}
\date{\today}

\begin{abstract}
While the general theory for the terminal-initial value problem for mean-field games (MFGs) has achieved a substantial progress, 
the corresponding  forward-forward problem is still poorly understood --
 even in the one-dimensional setting. Here, we consider one-dimensional forward-forward MFGs, study the existence of solutions and their long-time convergence. 
First, we discuss the relation between these models and systems of conservation laws. 
In particular, we identify new conserved quantities and study some qualitative properties of these
systems. 
Next, we introduce a class of wave-like equations
that are equivalent to forward-forward MFGs, and we derive a novel formulation as a system of conservation laws.
For first-order logarithmic forward-forward MFG, we establish the existence of a global solution. Then, we consider a class of explicit solutions and show the existence of shocks.
Finally, we examine parabolic forward-forward MFGs and establish the long-time convergence of the solutions. 
\end{abstract}

\maketitle

\section{Introduction}

Mean-field games (MFGs) are models for large populations of competing rational agents who  seek to optimize an individual objective function. A typical model is the  {\em backward-forward MFG}. In one dimension, this game  is determined by following the  system of partial differential equations (PDEs):
\begin{equation}\label{eq: General MFG}
 \begin{cases}
 -u_t + H(u_x)= \varepsilon u_{xx}   + g(m), \\
 m_t-(H'( u_x) m)_x=\varepsilon  m_{xx} .   \end{cases}
\end{equation}
For convenience, the spatial domain, corresponding to the variable $x$,  is the  $1$-dimensional torus, $ \Tt$, identified with the interval $[0,1]$.  The time domain, corresponding to the variable  $t$, is the interval $[0,T]$ for some terminal time, $T>0$. The unknowns in the above system are  $u:\Tt\times [0,T]\to \Rr$ and $m:\Tt\times [0,T]\to \Rr$. In this game, each agent seeks to solve an optimal control problem. The function $u(x,t)$ is the value function 
for this control problem for an agent located at  $x\in \Tt$ at the time $t$. 
This control problem is determined by a  
Hamiltonian, $H: \Rr\to \Rr$, $H\in C^2$, and a coupling between each agent and the mean field, $m$, given by the function $g: \Rr^+\to
\Rr$,  $g\in C^1$.
The first equation in \eqref{eq: General MFG} is a Hamilton-Jacobi equation
and expresses the optimality of the value function, $u$. 
 For each $t\in [0, T],$  $m$ is a probability density in $\Tt$.  The second equation of (\ref{eq: General MFG}), the Fokker-Planck equation, determines the evolution of $m$. The parameter $\varepsilon\geq 0$ is the viscosity coefficient in the Fokker-Planck equation;  $\varepsilon=0$ corresponds to  {\em first-order MFGs} and  $\varepsilon>0$ to {\em parabolic MFGs}. The system  \eqref{eq: General MFG} is endowed with terminal-initial conditions; the initial value of $m$ is prescribed at $t=0$ and the terminal value of $u$, at $t=T$:
 \begin{equation}
 \label{itc}
 \begin{cases}
 u(x,T) = u_T(x) \\
 m(x,0) = m_0(x).
  \end{cases}
  \end{equation}
As a result, \eqref{eq: General MFG}-\eqref{itc} is called the {\em terminal-initial value problem} or the {\em backward-forward MFG}. 

Here,  we examine a related model, the {\em forward-forward MFG} problem. This model  is constructed by the reversal of the time variable in the Hamilton-Jacobi equation in (\ref{eq: General MFG}).  Accordingly,  the   {\em forward-forward MFG system} in $\mathbb{ T}\times
[0,T]$ is determined by  
\begin{equation}
\label{ffmfg}
 \begin{cases}
  u_t + H(u_x)= \varepsilon u_{xx}   + g(m)\\
 m_t-(H'( u_x) m)_x=\varepsilon  m_{xx}, 
 \end{cases}
  \end{equation}
together with the  {\em initial-initial condition}:  
\begin{equation}\label{ini-ini}
 \begin{cases}
 u(x,0) = u_0(x) \\
 m(x,0) = m_0 (x).
  \end{cases}
\end{equation}
The forward-forward model was  introduced  in \cite{DY}
to  approximate
stationary MFGs.
The key insight is that the parabolicity in (\ref{ffmfg}) should imply
the long-time convergence to a stationary solution.
In the preceding MFG, a typical Hamiltonian, $H$, is the quadratic Hamiltonian,  $H(p)=\frac{p^{2}}{2}$, or for $\gamma>1,$ the power-like Hamiltonian, $H(p)=\frac{1}{\gamma}|p|^\gamma$ or  $H(p)=(1+p^2)^{\frac \gamma 2}$. Regarding the  coupling nonlinearity, $g$, here, we consider the power-like case, $g(m)=m^\alpha$ for some $\alpha>0$, or the logarithmic case, $g(m)=\ln m$.

Considerable research has focused on proving the existence of solutions for backward-forward MFGs. For example, weak solutions for parabolic problems were considered in  \cite{ ll2, porretta2}, strong solutions for parabolic problems in  \cite{GPim2, GPim1, ll2}, and weak solutions for first-order MFGs in \cite{Cd2, GraCard}.  The stationary case was also investigated in detail since it was first considered in  \cite{ll1}. For this case, the existence of classical and weak solutions was investigated in  \cite{GMit,GP,GPat,GPatVrt}. The uniqueness of solution is well understood (both for stationary and time-dependent MFGs) via the monotonicity method introduced in \cite{ll1,ll2}. 
Monotonicity properties are also fundamental for the existence theory developed in \cite{FG2}. One-dimensional MFGs provide examples and guidance 
for the study of higher-dimensional problems and numerical methods
\cite{AFG}.  Moreover, these games have an independent interest  in problems in  networks and graphs \cite{ CaCa2016, camillinet, MR3146865} and congestion  \cite{GP, GLP2}.

In contrast to that of the backward-forward case, our understanding of forward-forward MFGs is limited.
In particular,  the existence and the long-time convergence of the forward-forward model have not been addressed, except in a few cases, see  \cite{GPff}
and 
 \cite{llg2}. In 
\cite{llg2}, the  forward-forward problem was examined in the context of eductive stability of stationary MFGs with a logarithmic coupling.  In \cite{GPff}, the existence and regularity
of solutions for the forward-forward, uniformly
parabolic MFGs with subquadratic Hamiltonians was proven. Except for these cases, the question of existence and regularity is open  in all other regimes.  In the case of forward-forward MFGs without viscosity, these questions are particularly challenging. Moreover, the long-time convergence has not been established even in the parabolic case. Nevertheless, numerical results in  \cite{AcCiMa} and \cite{Ci} indicate that convergence holds and that the forward-forward model approximates well stationary solutions.

Not only as an effective tool to approximate stationary problems, the forward-forward MFGs can also be regarded  as a learning game. In backward-forward MFGs, the density of the agents is transported by the (future) optimal trajectories of an optimal control problem. In the  forward-forward model, the interpretation the evolution of the agents is less straightforward. In this model, the density is transported by past optimal trajectories because the corresponding control problem has initial data, not terminal data. Thus, 
the  actions of the agents are determined by a learning strategy where past densities drive their evolution. 

This paper is structured as follows.  In Section \ref{cl}, we reformulate
\eqref{eq: General MFG}-\eqref{itc} and  (\ref{ffmfg})-(\ref{ini-ini}) as systems of conservation laws. There, we identify new conserved quantities for these problems in the case where $\varepsilon=0$. Conserved quantities are fundamental in analyzing PDEs and in testing and validating numerical methods. 
Here, they are used in the long-time convergence analysis. 
Next, in Section \ref{wte}, we derive wave-type equations that are equivalent to (\ref{ffmfg})-(\ref{ini-ini}). For example, for  the first-order,  logarithmic forward-forward model, we   obtain the PDE
\[
u_{tt}=(1+u_x^2) u_{xx}.
\] 
The preceding equation is
 equivalent to an elastodynamics problem. 
The corresponding elastodynamics equations have entropy solutions when the stress function is monotone.
Thus, we obtain the existence of solutions for the original MFG.
 In addition, using results from \cite{BEJ}, we identify a class of explicit solutions for the logarithmic MFGs. These explicit solutions provide an example where shocks arise in the forward-forward model. 
 Finally, in Section \ref{pmfg}, we examine forward-forward parabolic MFGs. Here, the entropies 
identified in Section \ref{cl} play an essential role in our analysis of the long-time behavior of solutions.  Due to the parabolicity, these entropies are dissipated and force the long-time convergence of the solutions of (\ref{ffmfg})-(\ref{ini-ini}).

\section{Systems of conservation laws and first-order MFGs}
\label{cl}

Here, we consider deterministic MFGs; that is, $\varepsilon=0.$  In this case,  \eqref{eq:
General MFG} and \eqref{ffmfg} are equivalent to conservation laws, at least for smooth enough solutions. In this preliminary section, we examine these conservation laws and identify  conserved quantities. In Section \ref{pmfg}, we use these conserved quantities to establish the long-time convergence of the parabolic forward-forward MFG  \eqref{ffmfg}.

Before proceeding, we recall some well-known results on systems  
conservation
laws in one dimension. 
We consider a conservation law of the form \begin{equation}\label{eq: Conservation laws}
 U_t + (F(U))_x=0,
 \end{equation}
 where $U: \mathbb{R}\times \mathbb{T} \longrightarrow \mathbb{R}^2$
is the unknown and $F :\mathbb{R}^2 \longrightarrow \mathbb{R}^2  $ is the
flux function. We say that $(E,Q)$ is an entropy/entropy-flux pair if 
  \begin{equation}\label{eq : entropy-entropy flux}
  (E(U))_t + (Q(U))_x=0
  \end{equation}
 for any smooth solution of \eqref{eq: Conservation
laws}. We note that  (\ref{eq : entropy-entropy flux}) implies that  $E(U)$ is
a conserved quantity  if the solution $U$ of (\ref{eq: Conservation
laws}) is smooth; that is, 
  \begin{equation}
 \dfrac{d}{dt} \int_{\mathbb{T}} E (U) dx =- \int_{\mathbb{T}} (Q(U))_x dx
 =0.
  \end{equation}

\subsection{Backward-Forward MFG}\label{sec: bfmfg}

 Now, we assume that \eqref{eq:
General MFG} has a smooth enough solution, for example, $u,m\in C^2(\Tt\times (0,\infty))\cap C(\Tt\times
[0,\infty)).$  We set $v=u_x$ and differentiate the first equation in \eqref{eq:
General MFG} 
with respect to $x$.   Accordingly, we obtain the following system
\begin{equation}\label{eq: foba mfg system2}
\begin{cases}
v_t+(g(m)-H(v))_x=0,\\
m_t-(mH'(v))_x=0.
\end{cases}
\end{equation}

To investigate the existence of an entropy for (\ref{eq: foba mfg system2}), we look for an entropy/entropy-flux  $(E,Q)$ satisfying \eqref{eq : entropy-entropy flux} for $U=(v,m)$.
By expanding \eqref{eq
: entropy-entropy flux},  we get 
 
\begin{equation}\label{eq : fb mfg entropy 01}
 \frac{\p E}{\p v} v_t+\frac{\p E}{\p m} m_t  +  \frac{\p Q}{\p v} v_x+\frac{\p Q}{\p m} m_x =0.
  \end{equation}
 In light of (\ref{eq: foba mfg system2}),   (\ref{eq : fb mfg entropy 01}) becomes

\begin{equation}\label{eq : fb mfg entropy 0}
 \frac{\p E}{\p v} H'(v) v_x- \frac{\p E}{\p v} g'(m) m_x + \frac{\p E}{\p m} H'(v) m_x  +\frac{\p E}{\p m} m H''(v)v_x   +  \frac{\p Q}{\p v} v_x+\frac{\p
Q}{\p m} m_x =0.
  \end{equation}
Thus, 
\begin{equation}\label{eq : fb mfg entropy 1}
 \frac{\p Q}{\p v}= - \frac{\p E}{\p v} H'(v)-\frac{\p E}{\p m} m H''(v)\qquad\hbox{and}\qquad \frac{\p Q}{\p m}=\frac{\p E}{\p v} g'(m) -\frac{\p E}{\p
m} H'(v).
\end{equation}
Consequently, we obtain the following PDE for $E$
\begin{equation}\label{eq : fb mfg entropy 1}
 \frac{\p }{\p m}\left(  - \frac{\p E}{\p v} H'(v)-\frac{\p E}{\p m} m H''(v)\right) =
\frac{\p }{\p v}\left( \frac{\p E}{\p v} g'(m) -\frac{\p E}{\p
m} H'(v)\right). 
\end{equation}
After elementary computations,
the above equation becomes \begin{equation}\label{eq: cons E condition}
\frac{1}{ H''(v)}
 \frac{\p^2 E}{\p v^2}+\frac{1}{P''(m)} \frac{\p^2 E}{\p m^2}=0,
\end{equation}
where 
\begin{equation}
\label{Pofm}
P''(m)=\frac{g'(m)}{m}.
\end{equation}
The preceding equation has the following trivial solutions:
 $$E(v,m)=\alpha v+\beta m,\ \alpha,\beta \in \Rr.$$ By inspection, we can verify that  the following two expressions solve \eqref{eq: cons E condition}:
\[
E(v,m)=mv \quad \text{and}  \quad E(v,m)=H(v)-P(m).
\]
Moreover, if $g$ is increasing, $P$ is a convex function whereas if $g$ is decreasing, $P$ is concave. 

Using separation of variables and writing
\[
E=\Phi(v)\Psi(m),
\]
we derive the following conditions
\[
\begin{cases}
\frac{1}{H''(v)}\frac{\Phi''(v)}{\Phi(v)}=\lambda\\
\frac{1}{P''(m)}\frac{\Psi''(m)}{\Psi(m)}=-\lambda.
\end{cases}
\]

The conditions above take a simple form when $g(m)=\dfrac{m^2}{2}$, which corresponds to $P(m)=\frac{m^2}{2}$, and $H(v)=\frac{v^2}{2}$, namely
\[
\begin{cases}
\Phi''(v)=\lambda\Phi(v)\\
\Psi''(m)=-\lambda \Psi(m) .
\end{cases}
\]
Thus,  we have solutions of the form $\Phi(v)=e^{\pm \sqrt{\lambda}v}$ and $\Psi(m)=e^{\pm i\sqrt{\lambda}m}$, which have exponential growth or oscillation depending upon the sign of $\lambda$. 
In addition to these conservation laws, there are also polynomial conservation laws. For illustration, some of these are shown in Table \ref{T1}. In Table \ref{T2}, we present some conservation laws for the anti-monotone backward-forward MFG with $g(m)=-\frac{m^2}{2}$.
These laws are 
straightforward to compute as the determining equations for $E$ are
\[
\frac{\partial^2 E}{\partial m^2}+\frac{\partial^2 E}{\partial v^2}=0 
\]
in the monotone case and
\[
\frac{\partial^2 E}{\partial m^2}-\frac{\partial^2 E}{\partial v^2}=0 
\]
in the anti-monotone case. 
In both cases, these equations have solutions that are homogeneous polynomials in $m$ and $v$. In the monotone case, these conservation laws are the real and imaginary parts of $(m+i v)^k$. 
In the anti-monotone case, some of the conservation laws are coercive and, thus,  control the $L^p$ norms of $v$\ and $m$ (at least for smooth solutions).

\begin{table}
\centering
\begin{tabular}{|c|c|}\hline
Degree &$E(v,m)$\\\hline
3&$ v^3-3 m^2 v$ \\\hline
3& $m^3-3 m v^2$ \\\hline
4&$-6 m^2 v^2+m^4+v^4$\\\hline
4&$ m v^3-m^3 v$\\\hline
5&$ -10 m^2 v^3+5 m^4 v+v^5$\\\hline
5&$ -10 m^3 v^2+5 m v^4+m^5$\\\hline
6&$ 15 m^4 v^2-15 m^2 v^4-m^6+v^6$\\\hline
6&$ m^5 v-\frac{10}{3} m^3 v^3+mv^5$\\\hline
\end{tabular}
\smallskip
\caption{Conservation laws for the backward-forward MFG with $H(v)=\frac{v^2}{2}$ and $g(m)=\frac{m^2}{2}$ up to degree 6. }
\label{T1}
\end{table}

\begin{table}
\centering
\begin{tabular}{|c|c|}\hline
Degree &$E(v,m)$\\\hline
3&$3 m^2 v+v^3$\\\hline
3&$3 m v^2+m^3$\\\hline
4&$6 m^2 v^2+m^4+v^4$\\\hline
4&$ m^3 v+m v^3$\\\hline
5& $10 m^2 v^3+5
m^4 v+v^5$\\\hline
5& $10 m^3 v^2+5 m v^4+m^5$\\\hline
6&
$ 15 m^4 
v^2+15 m^2 v^4+m^6+v^6$\\\hline
6& $m^5 v+\frac{10}{3} 
m^3 v^3+m v^5$\\\hline
\end{tabular}
\smallskip
\caption{Conservation laws for the backward-forward MFG with $H(v)=\frac{v^2}{2}$
and $g(m)=-\frac{m^2}{2}$ up to degree 6. }
\label{T2}
\end{table}

\subsection{Forward-forward MFG}\label{ss: ffmfg}
As previously,  we assume that \eqref{ffmfg} has a solution, $u,m\in C^2(\Tt\times
(0,\infty))\cap C(\Tt\times
[0,\infty))$, and we set $v:=u_x$. We differentiate the first equation in \eqref{ffmfg} with respect to $x$ and obtain the system:
\begin{equation}\label{eq: foff mfg system2}
 \begin{cases}
 v_t-(g(m)-H(v))_x=0,\\
 m_t-(mH'(v))_x=0.
 \end{cases}
 \end{equation}
We begin by examining the entropies for \eqref{eq: foff mfg system2}; that is, we look for $(E,Q)$ satisfying \eqref{eq : entropy-entropy flux} for $U=(v,m)$. We expand (\ref{eq : entropy-entropy flux}) to get 
 \begin{equation}\label{XYZ}
 \frac{\p E}{\p v} v_t+\frac{\p E}{\p m} m_t  +  \frac{\p Q}{\p v} v_x+\frac{\p
Q}{\p m} m_x =0.
  \end{equation}
 In light of (\ref{eq: foff mfg system2}), (\ref{XYZ}) becomes
\begin{equation}\label{eq : ff mfg entropy 0}
- \frac{\p E}{\p v} H'(v) v_x+ \frac{\p E}{\p v} g'(m) m_x +\frac{\p E}{\p
m} H'(v) m_x  +\frac{\p E}{\p m} m H''(v)v_x   +  \frac{\p Q}{\p v} v_x+\frac{\p
Q}{\p m} m_x =0.
  \end{equation}
Thus, 
\begin{equation}\label{eq : ff mfg entropy 1}
 \frac{\p Q}{\p v}=  \frac{\p E}{\p v} H'(v)-\frac{\p E}{\p m} m H''(v)\qquad\hbox{and}\qquad
\frac{\p Q}{\p m}=-\frac{\p E}{\p v} g'(m) -\frac{\p E}{\p m} H'(v).
\end{equation}
Consequently,
\begin{equation}\label{eq : ff mfg entropy 1}
 \frac{\p }{\p m}\left( \frac{\p E}{\p v} H'(v)-\frac{\p E}{\p m} m H''(v)\right) =
\frac{\p }{\p v}\left(-\frac{\p E}{\p v} g'(m) -\frac{\p E}{\p
m} H'(v)\right). 
\end{equation}
 This last equation simplifies to
 \begin{equation}\label{eq: cons E condition 1}
 \frac{1}{H''(v)} \frac{\p^2 E}{\p v^2}+\frac{2 H'(v)}{H''(v)g'(m)}
 \frac{\p^2 E}{\p v \p m}-\frac{m}{g'(m)}  \frac{\p^2 E}{\p m^2}=0.
 \end{equation}
 The preceding equation has a trivial family of solutions,
 $$E(v,m)=\alpha v+\beta m,\ \alpha,\beta \in \Rr.$$ 
Moreover, \eqref{eq: cons E condition 1} admits a solution of the form:
 $$E(v,m)=H(v)+P(m)$$
 with $P(m)$ as in \eqref{Pofm}.
In contrast with the backward-forward case, here, if $g$ is increasing, the previous entropy is convex. This observation is crucial for our proof of convergence of the forward-forward mean-field games with viscosity. For illustration, we consider the case $H(v)=\frac {v^2}{2}$.
In Tables \ref{T3} and \ref{T4}, we present some polynomial conservation laws for, respectively, a monotone, $g(m)=\frac{m^2}{2}$, and an anti-monotone, $g(m)=-\frac{m^2}{2}$, quadratic forward-forward MFG. These conservation laws satisfy
\[
\frac{\partial^2 E}{\partial v^2}\pm\frac{2 v}{m}\frac{\partial^2E}{\partial v\partial m}\mp\frac{\partial^2 E}{\partial m^2}=0,  
\]
where the $-$ sign corresponds to the monotone case and the $+$ sign to the anti-monotone case.

\begin{table}
\centering
\begin{tabular}{|c|c|}\hline
Degree &$E(v,m)$\\\hline
3&$v^3-3 m^2 v$ \\\hline
4& $-2 m^2 v^2-\frac{1}{3} m^4+v^4$ \\\hline
4&$m^3 v$\\\hline
5&$  -2 m^2 v^3-3 m^4 v+v^5$\\\hline
6&$\frac{45}{7} 
m^4 v^2-\frac{15}{7} m^2 v^4+\frac{3 m^6}{7}+v^6$\\\hline
\end{tabular}
\smallskip
\caption{Conservation laws for the forward-forward MFG with $H(v)=\frac{v^2}{2}$
and $g(m)=\frac{m^2}{2}$ up to degree 6. }
\label{T3}
\end{table}

\begin{table}
\centering
\begin{tabular}{|c|c|}\hline
Degree &$E(v,m)$\\\hline
3&$ 3 m^2 v+v^3$ \\\hline
4& $2 m^2 v^2-\frac{1}{3} m^4+v^4$ \\\hline
4&$m^3 v$\\\hline
5&$ 2 m^2 v^3-3 m^4 v+v^5$\\\hline
6&$\frac{45}{7} 
m^4 v^2+\frac{15}{7} m^2 v^4-\frac{3}{7} m^6+v^6$\\\hline
\end{tabular}
\smallskip
\caption{Conservation laws for the forward-forward MFG with $H(v)=\frac{v^2}{2}$
and $g(m)=-\frac{m^2}{2}$ up to degree 6. }
\label{T4}
\end{table}

\section{Wave-type equations}
\label{wte}

Here, we introduce a class of wave-type equations that are equivalent to forward-forward MFGs. Using these equations, we rewrite the forward-forward MFG as a new  system of conservation laws.
For  $g(m)=m^\alpha$, this new system depends polynomially in $\alpha$ in contrast with \eqref{eq: foff mfg system2} where the dependence on  $\alpha$ is exponential. This new formulation is of interest for the numerical simulation of forward-forward MFGs with a large value $\alpha$ and substantially simplifies the computation of conserved quantities.  
Subsequently, we consider the logarithmic nonlinearity and, using a result from DiPerna, we prove the existence of a global solution for the forward-forward problem. Moreover, this solution is bounded in $L^\infty$. Finally, also for the logarithmic nonlinearity, we investigate the connection between this new formulation and a class of equations introduced in \cite{BEJ}. In particular, we provide a representation formula for some solutions of the forward-forward MFG and establish the existence of shocks.  

\subsection{Wave equations and forward-forward MFGs}We continue our study of forward-forward MFGs by reformulating \eqref{ffmfg}  as a  scalar nonlinear wave equation.
Here, 
we assume that $H,g$ are smooth and $g$ is either strictly increasing or decreasing; that is, $g'\neq 0$. From the first equation in \eqref{ffmfg}, we have that
 \begin{equation}\label{eq: density in terms of vfunction}
        m=g^{-1}(u_t+H(u_x)).
  \end{equation}       
We differentiate $\eqref{eq: density in terms of vfunction}$ with respect to $t$ and $x$ to obtain, respectively, 
\begin{equation}
 \begin{aligned}\label{eq: diff1 density in terms of vfunction}
 m_t=(g^{-1})'\left(u_t+H(u_x)\right) (u_{tt}+H'(u_x)u_{xt})
 \end{aligned}
 \end{equation}
 and
 \begin{equation}\label{eq: diff2 density in terms of vfunction}
 \begin{aligned}
 (m H'(u_x))_x&=(g^{-1})'(u_t+H(u_x))(u_{tx}+H'(u_x)u_{xx})H'(u_x)\\
 &+g^{-1} (u_t+H(u_x)) H''(u_x)u_{xx}.\\
 \end{aligned}
 \end{equation}
Next, we combine (\ref{eq: diff1 density in terms of vfunction}) and (\ref{eq: diff2 density in terms of vfunction}) and get
 \begin{align*}        
 m_t-(mH'(u_x))_x&=(g^{-1})'(u_t+H(u_x))(u_{tt}+H'(u_x)u_{xt})\\
 &-(g^{-1})'(u_t+H(u_x))(u_{tx}+H'(u_x)u_{xx})H'(u_x)\\
 &-g^{-1}(u_t+H(u_x)) H''(u_x)u_{xx}.
 \end{align*}
 Hence, the second equation in \eqref{eq: foff mfg system2}  yields
 \begin{align*}             
 (g^{-1})'(u_t+H(u_x))\left( u_{tt}-(H'(u_x))^2u_{xx}\right) =g^{-1}(u_t+H(u_x)) H''(u_x)u_{xx},
 \end{align*}
 or, equivalently,
 \begin{align}
 u_{tt}=\left(
 (H'(u_x))^2
 +
 g'(g^{-1}(u_t+H(u_x)))
 g^{-1}(u_t+H(u_x))
 H''(u_x)
 \right)u_{xx};
 \end{align}
 that is,
 \begin{equation}\label{eq:   we foff mfg}
 u_{tt}=\left((H'(u_x))^2+mg'(m) H''(u_x)\right)u_{xx}.
 \end{equation}
 Thus,  \eqref{eq: foff mfg system2}   is equivalent to the nonlinear second-order equation \eqref{eq: we foff mfg} coupled with \eqref{eq: density in terms of vfunction}. Moreover, if $g$ is increasing, the preceding equation is hyperbolic. In the particular case where $g(m)=
 \ln m$, \eqref{eq: we foff mfg} takes the simpler form
        \begin{equation}\label{eq: wave equation with H}
        u_{tt}=\left((H'(u_x))^2+H''(u_x)\right)u_{xx}.
        \end{equation}       
 
 \subsection{A new system of conservation laws}
 
 Now, we consider the wave equations introduced in the preceding section and reformulate them as a new system of conservation laws. 
For that, we set $v=u_x$ and $w=u_t$. Then, \eqref{eq:   we foff mfg} is equivalent to
 \begin{equation}\label{eq: foff mfg system1}
 \begin{cases}
 v_t=w_x,\\
 w_t=\left((H'(v))^2+g'( g^{-1}(w+H(v))) g^{-1}(w+H(v)) H''(v)\right)v_x.
 \end{cases}
 \end{equation} 
 We set \[\phi(v,w)=(H'(v))^2+g'( g^{-1}(w+H(v))) g^{-1}(w+H(v)) H''(v).
 \]
  Accordingly, \eqref{eq: foff mfg system1} becomes
   \begin{equation}\label{eq: foff mfg system1 phi}
 \begin{cases}
 v_t=w_x,\\
 w_t=\phi(v,w)v_x.
 \end{cases}
 \end{equation} 
 
 In the sequel, we choose 
 \begin{equation} \label{eq : Stored energy}
 H(v)=\dfrac{v^2}{2}\qquad\hbox{and}\qquad g(m)=m^{\alpha}.
 \end{equation}
  Consequently, we have that 
$$mg'(m)=\alpha g(m).$$
 Therefore,  \eqref{eq: foff mfg system1 phi} takes the form
 \begin{equation*}
 \begin{cases}
 v_t=w_x,\\
 w_t=\left(v^2+\alpha(w+ v^2 )\right)v_x.
 \end{cases}
 \end{equation*} 
 Next, we search for a conserved quantity, $F(v,w),$ for the preceding system.
Arguing as before, we  see that $F$ is conserved
if and only if
 \begin{equation}\label{eq: cons F condition}
 \frac{\p^2 F}{\p v^2}=\frac{\p}{\p w}\left(\frac{\p F}{\p w}\\ \phi(v,w)\right),
 \end{equation}
 where $\phi(v,w)=v^2+\alpha(w+ v^2 )$.
 A particular solution of \eqref{eq: cons F condition} is 
 \begin{equation}\label{eq: particular conserved quantity 1}
F(v,w)= w+ \dfrac{\alpha}{2}v^2.
 \end{equation} 
Accordingly, we set 
\begin{equation}\label{eq: particular conserved quantity 2}
z(x,t)= w(x,t)+ \dfrac{\alpha}{2}v^2(x,t).
\end{equation} 
Thus, we have  that
\begin{align*}
        z_t &= w_t+ \alpha v v_t\\
            &=\left(v^2+\alpha\left(w+ \dfrac{v^2}{2} \right)\right)v_x+\alpha v w_x\\
            &= \left(1+\dfrac{\alpha}{2}\right)v^2v_x +\alpha v_x w+\alpha v w_x\\
            &=\left( \dfrac{1}{3}+\dfrac{\alpha}{6}\right)v^3_x +\alpha (vw)_x\\
            &=\left( \left( \dfrac{1}{3}+\dfrac{\alpha}{6}\right)v^3 +\alpha vw\right) _x.
\end{align*}
Hence, we obtain the following equivalent system of conservation laws
 \begin{equation}\label{eq: foff mfg system3}
 \begin{cases}
 z_t=\left( \left( \frac{1}{3}+\frac{\alpha}{6}-\frac{\alpha^2}2\right)v^3 +\alpha v z\right) _x,\\
 v_t=\left(z+\dfrac{\alpha}{2}v^2\right)_x.
 \end{cases}
 \end{equation}
We observe that $\alpha$ is no longer in the exponent of the foregoing equation. Therefore, the growth of the nonlinearity becomes polynomial with a fixed degree for any exponent
$\alpha$. This property is relevant for the numerical analysis and simulation of these games. Moreover, in this formulation, we obtain further polynomial conservation laws for \eqref{eq:
foff mfg system3} shown in Table \ref{T5}. 
 
\begin{table}
\centering
\begin{tabular}{|c|c|}\hline
Degree &$E(z,v)$\\\hline
2&$ v z$ \\\hline
4& $3 \alpha ^2 v ^4-\alpha  v ^4-12 \alpha  v ^2 z-2 v ^4-12 z^2$ \\\hline
5&$v  \left(9 \alpha ^2 v ^4-3 \alpha  v ^4-20 \alpha  v ^2 z-6 v ^4-60 z ^2\right)$\\\hline
6&$ 6 \alpha ^3 v ^6-2 \alpha ^2 v ^6-4 \alpha  v ^6+5 \alpha ^2 v ^4 z-5 \alpha  v ^4 z-60 \alpha  v ^2 z ^2-10 v  ^4 z-20 z ^3$\\\hline
\end{tabular}
\smallskip
\caption{Conservation laws for the modified forward-forward MFG \eqref{eq: foff mfg system3}
up to degree 6. }
\label{T5}
\end{table}

\subsection{Forward-forward MFGs with a logarithmic nonlinearity -- existence of a solution}

Here, we prove the existence of a solution of \eqref{eq: wave equation with H} for a  quadratic Hamiltonian. For our proof, we use the ideas in the preceding subsection and rewrite    \eqref{eq: wave equation with
H} as a system of conservation laws. The system we consider here is a special case of the ones   investigated in \cite{DiPerna83}, in the whole space, and in \cite{DeStTz00}, in the periodic case. More precisely, we examine the system
\begin{equation} \label{eq : elastodynamics}
\begin{cases}
v_t -w_x= 0 \\
w_t - \sigma(v)_x=0
\end{cases}
\end{equation}
with the initial conditions
\begin{equation}
\begin{cases}
v(x,0)=v_0(x)\\
w(x,0)=w_0(x).
\end{cases}
\end{equation}
 Here, $\sigma : \mathbb{R}\to \mathbb{R}$ is a $C^2$ function, $\sigma'> 0$, $(v,w)$ is the unknown
and $(x,t)\in \Tt\times[0,T]$. We consider initial data $v_0, w_0\in L^\infty(\Tt)$. As pointed out in \cite{DeStTz00},  if (\ref{eq
: elastodynamics}) has a $C^1$ solution then  there exists $u$ such that
$w=u_t,$ $v=u_x$  and a straightforward computation yields \begin{equation} \label{eq
: Wave elastodynamics}
u_{tt}- (\sigma(u_x))_x=0.
\end{equation}
 In addition, for a quadratic Hamiltonian, $H(p)=\frac{p^2}{2}$,
 \eqref{eq : Wave elastodynamics} is equivalent to \eqref{eq: wave equation
with H} for  
\begin{equation}
\label{sigma}
\sigma(z)=z+\frac{z^3}{3}.
\end{equation} 
By proving the existence of a solution to  
\eqref{eq: wave equation with H}, we get a solution of  the corresponding forward-forward MFG. 

In  \cite{DiPerna83}, the author considers the   viscosity approximation\begin{equation} \label{eq : viscosity elastodynamics}
\begin{cases}
v_t^\varepsilon -w_x^\varepsilon= \varepsilon v _{xx} \\
v_t^\varepsilon - \sigma(u^\varepsilon)_x=\varepsilon w_{xx}
\end{cases}
\end{equation}
and proves that,  in the limit $\varepsilon\to 0$, $(u^\varepsilon, v^\varepsilon)$ converges to a solution of 
 \eqref{eq : elastodynamics}. 
For the reader convenience, we reproduce a result from \cite{DeStTz00} that ensures the existence of a solution of (\ref{eq : elastodynamics}) in $\mathbb{\Tt}\times[0,T]$. 

\begin{theorem}
Let $\sigma$ be given by \eqref{sigma}. Suppose that $v_0, w_0\in L^\infty(\Tt).$
Then \eqref{eq : elastodynamics} has a weak solution $v,w\in L^\infty(\Tt\times [0,T])$.               
\end{theorem}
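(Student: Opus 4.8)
The plan is to obtain the weak solution as the vanishing-viscosity limit of the parabolic regularization \eqref{eq : viscosity elastodynamics}, following DiPerna via the method of compensated compactness. For each fixed $\varepsilon>0$ I would first invoke standard parabolic theory to get a global smooth solution $(v^\varepsilon,w^\varepsilon)$, and then concentrate entirely on deriving bounds that are \emph{uniform} in $\varepsilon$, since these are what survive the limit.

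The first batch of estimates is a uniform $L^\infty$ bound via invariant regions. Writing the system as $U_t+(F(U))_x=0$ with $U=(v,w)$ and $F(U)=(-w,-\sigma(v))$, the Jacobian $DF$ has eigenvalues $\lambda_\pm=\pm\sqrt{\sigma'(v)}$; since $\sigma'(v)=1+v^2>0$ the system is strictly hyperbolic. Its Riemann invariants are $r_\pm(v,w)=w\pm\int_0^v\sqrt{\sigma'(s)}\,ds$, and because the viscosity matrix in \eqref{eq : viscosity elastodynamics} is a multiple of the identity, the rectangles $\{|r_\pm|\leq R\}$ are invariant regions for the parabolic flow (Chueh--Conley--Smoller). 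Choosing $R$ large enough to contain the range of the initial data — possible since $v_0,w_0\in L^\infty(\Tt)$ — yields $\|v^\varepsilon\|_\infty+\|w^\varepsilon\|_\infty\leq C$ independently of $\varepsilon$. Multiplying the system by the gradient of the mechanical energy $\eta_0(v,w)=\tfrac{w^2}{2}+\Sigma(v)$, with $\Sigma'=\sigma$, and integrating gives in addition the uniform dissipation bound $\varepsilon\iint\big(|v_x^\varepsilon|^2+|w_x^\varepsilon|^2\big)\,dx\,dt\leq C$, which relies on the strict convexity of $\eta_0$ (guaranteed again by $\sigma'>0$).

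Next comes compensated compactness. Up to a subsequence, $(v^\varepsilon,w^\varepsilon)$ generates a Young measure $\nu_{(x,t)}$ supported in the invariant rectangle. For any entropy/entropy-flux pair $(\eta,q)$ one computes $\partial_t\eta(U^\varepsilon)+\partial_x q(U^\varepsilon)=\varepsilon\,\partial_{xx}\eta(U^\varepsilon)-\varepsilon\,D^2\eta(U^\varepsilon){:}(U_x^\varepsilon\otimes U_x^\varepsilon)$; the first term tends to $0$ in $H^{-1}$ and the second is bounded in $L^1$ by the dissipation estimate, so by Murat's lemma the entropy production is precompact in $H^{-1}_{\mathrm{loc}}$. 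Applying the div--curl lemma to two entropy pairs then gives the Tartar commutation relation
\[
\langle\nu,\,\eta_1 q_2-\eta_2 q_1\rangle=\langle\nu,\eta_1\rangle\langle\nu,q_2\rangle-\langle\nu,\eta_2\rangle\langle\nu,q_1\rangle
\]
for all entropy pairs $(\eta_i,q_i)$.

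Finally I would reduce $\nu$ to a Dirac mass, which is the most delicate step and the main obstacle. Feeding the family of Lax progressing-wave entropies concentrated along the two characteristic fields into the commutation relation, and exploiting the genuine nonlinearity $\sigma''(v)=2v$, one forces the support of $\nu$ to collapse to a single point. The difficulty is that genuine nonlinearity degenerates exactly at the inflection point $v=0$ of $\sigma$, so the reduction cannot be quoted verbatim from the strictly genuinely nonlinear theory; it requires the refined analysis DiPerna developed precisely for fluxes with an isolated inflection. Once $\nu=\delta_{(v,w)}$ almost everywhere, the convergence $(v^\varepsilon,w^\varepsilon)\to(v,w)$ is strong in $L^p_{\mathrm{loc}}$, the nonlinearity passes to the limit, $\sigma(v^\varepsilon)\to\sigma(v)$, and the limit $(v,w)\in L^\infty(\Tt\times[0,T])$ is a weak solution of \eqref{eq : elastodynamics}.
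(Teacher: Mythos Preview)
Your outline is correct and is essentially a faithful expansion of DiPerna's vanishing-viscosity/compensated-compactness argument that the paper invokes: invariant-region $L^\infty$ bounds via Chueh--Conley--Smoller (using $\sigma'>0$ and $z\sigma''(z)>0$ for $z\neq 0$), an energy-dissipation bound from the convex mechanical entropy, $H^{-1}$-compactness of entropy production via Murat, the Tartar commutation relation, and finally the Young-measure reduction, with the correct observation that the single inflection point $\sigma''(0)=0$ is exactly the situation DiPerna's refined analysis is designed for.

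The only point worth flagging is that the paper's own ``proof'' is purely a citation-and-check: it verifies that $\sigma'>0$ and that $\sigma''$ vanishes at a single point, and then defers the existence to \cite{DeStTz00} (with the $L^\infty$ bound attributed to \cite{DiPerna83} and \cite{ChCoSm77}). Note in particular that \cite{DeStTz00} constructs the entropy solution in the periodic setting by \emph{time discretisation} rather than by the viscosity limit you describe; the Young-measure reduction step is shared, but the approximating sequence and the mechanism producing $H^{-1}$-compactness differ. Your viscosity route is the one the paper sketches in the paragraph preceding the theorem (via \eqref{eq : viscosity elastodynamics}), so it is entirely consistent with the paper's presentation; just be aware that the reference actually cited for the periodic result uses a different approximation scheme.
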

\begin{proof}
The theorem follows from the results in  \cite{DeStTz00}
because    $\sigma'>0$ and $\sigma''$ vanishes at a single point.
 Furthermore, as shown in  \cite{DiPerna83}, because
\begin{equation}
z\sigma''(z)>0 \qquad  \forall z\neq 0\label{eq: stress constraints 2}
\end{equation}
  and the initial data belongs to $L^{\infty}(\mathbb{T}\times[0,T])$, the theory of invariant regions developed in \cite{ChCoSm77} ensures that  $$\|v\|_{L^{\infty}(\mathbb{R}\times[0,T])}+ \|w\|_{L^{\infty}(\mathbb{R}\times[0,T])}\leq C. $$
 \end{proof}

\subsection{Logarithmic forward-forward MFGs and Hamilton-Jacobi flows}

We end this section with a brief discussion of the connection between 
the logarithmic forward-forward MFG and a class of Hamilton-Jacobi
flows introduced in  \cite{BEJ}. As in discussed in that reference, we consider the Hamilton-Jacobi equation
 \begin{equation}\label{eq : Hamilton Jacobi}
u_t + G(u_x)=0.
  \end{equation}
Assuming smoothness in the equation, we differentiate respectively with respect to $x$ and $t$ to obtain:
 \begin{equation}\label{eq : Hamilton Jacobi diff x}
u_{tx} + G'(u_x)u_{xx}=0
  \end{equation}
and 
 \begin{equation}\label{eq : Hamilton Jacobi diff t}
u_{tt} + G'(u_x)u_{tx}=0.
  \end{equation}
Next, we combine (\ref{eq : Hamilton Jacobi diff x}) and (\ref{eq : Hamilton Jacobi diff t}) to get 
 \begin{equation}\label{eq : Hamilton Jacobi diff t,x combined}
u_{tt} - [G'(u_x)]^2 u_{xx}=0.
  \end{equation}
Finally,  we set 
\begin{equation}\label{eq : H for HJ technique}
 G(p)=\begin{cases}
\frac{1}{2}[p\sqrt{1+p^2}+\arcsinh(p)]\qquad &p\geq 0\\
-\frac{1}{2}[p\sqrt{1+p^2}+\arcsinh(p)]\qquad &p<0,
  \end{cases}
\end{equation}
so that (\ref{eq : Hamilton Jacobi diff t,x combined}) becomes
\[
u_{tt}-(1+u_x^2)u_{xx}=0. 
\]
We observe that G is convex. Thus, we can compute the solution  of \eqref{eq : Hamilton Jacobi} by using the Lax-Hopf formula.  For that, 
we introduce the Legendre transform 
\[
G^*(v)=\sup_{p} pv -G(p) 
\]
and, according to  
 the Lax-Hopf formula, we get the following representation for the solution of \eqref{eq : Hamilton Jacobi}
\begin{equation}
\label{efor}
u(x,t)=\inf_{y} t G^*\left(\frac{x-y}{t}\right)+u(y,0).
\end{equation}
If $u(x,0)$ is differentiable, so is $u(x,t)$ for $0<t<T^*$, where $T^*$\ is the time of the first shock. 

Now, we set
\[
m=e^{H(u_x(x,t))-G(u_x(x,t))}. 
\]
Then, for smooth enough solutions, a simple calculation gives
\[
m_t-(m u_x)_x=0.
\]
Thus, we see that $u$ and $m$ solve the forward-forward MFG
\begin{equation}
\label{lmfg}
\begin{cases}
  u_t + \frac{u_x^2}{2}= \ln m\\
 m_t-( m u_x)_x=0.  
 \end{cases}
\end{equation}
Finally, because \eqref{eq : Hamilton Jacobi diff t,x combined} depends only on the $G'(u_x)^2$, we can repeat the discussion above for the equation 
\[
u_t-G(u_x)=0, 
\]
and obtain another explicit solution. 

The examples we discuss in this section show that \eqref{lmfg} develops shocks in finite time as the regularity of $u$ is at best the regularity of the solutions of the Hamilton-Jacobi equation \eqref{eq : Hamilton Jacobi}.
Moreover, the convergence results for Hamilton-Jacobi equations (see, for example,  \cite{ CGMT,MR2237158, FATH4,  MR2396521, MR1457088})\ show that
the function $u$ given by \eqref{efor} converges (up to additive constants) as $t\to \infty$ to a stationary solution of
\[
G(u_x)=\overline{G}.
\]

\section{Parabolic MFGs}
\label{pmfg}

In Section \ref{ss: ffmfg}, we examined the first-order forward-forward MFGs ($\epsi=0$) and determined several conserved quantities (entropies). In the
parabolic ($\epsi>0$) case, these entropies are dissipated.
 Here, we use this dissipation to establish the long-time convergence of solutions.

As before, by differentiating 
 \eqref{ffmfg} with respect to $x$, we get
 \begin{equation}\label{eq: parabolic ffmfg}
        \begin{cases}
        v_t+(g(m)-H(v))_x=\epsi v_{xx},\\
        m_t-(mH'(v))_x= \epsi m_{xx},
        \end{cases}
\end{equation}
where $v=u_x$.  We assume that $g$ is $C^1$ and strictly increasing, and that $H$ is $C^2$ and strictly convex; that is, $H''(v)>0$ for all $v\in \Rr$.  Additionally, we impose
\begin{equation}\label{eq: fixedmeanvalues}
        \int\limits_{\Tt} v(x,0)dx=0, \quad \int\limits_{\Tt} m(x,0)dx=1.
\end{equation}
The foregoing conditions are natural because $v$ is the derivative of a periodic function, $u,$ and $m$ is a probability density.
A straightforward computation yields the following result. 
\begin{lem}
Suppose that $v,m \in C^2(\Tt \times (0,+\infty))\cap C(\Tt \times (0,+\infty))$ solve  \eqref{eq: parabolic ffmfg}. Furthermore, let $E(v,m)$ be a $C^2$ entropy for  \eqref{eq: foff mfg system2}; that is, $E(v,m)$ satisfies \eqref{eq: cons E condition 1}. Then, \begin{equation}\label{eq: time derivative of entropy}
\frac{d}{dt}\int_{\mathbb{T}} E(v, m) dx= -\varepsilon \int_{\mathbb{T}} (v_x, m_x)^T D^2E(v, m)(v_x, m_x)dx.
\end{equation}
\end{lem}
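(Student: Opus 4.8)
The plan is to verify the claimed identity by a direct computation: differentiate the integral $\int_{\Tt} E(v,m)\,dx$ in time, substitute the parabolic system \eqref{eq: parabolic ffmfg} for $v_t$ and $m_t$, and then integrate by parts in $x$ over the torus (so all boundary terms vanish by periodicity). The key observation that makes this work is that the first-order flux terms in \eqref{eq: parabolic ffmfg} are precisely those of the conservation-law system \eqref{eq: foff mfg system2}, so the contribution of the flux to $\frac{d}{dt}\int_{\Tt} E\,dx$ must vanish whenever $E$ satisfies the entropy condition \eqref{eq: cons E condition 1}, leaving only the viscous ($\epsi$) terms.

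Concretely, I would first write
\[
\frac{d}{dt}\int_{\Tt} E(v,m)\,dx=\int_{\Tt}\left(\frac{\p E}{\p v}v_t+\frac{\p E}{\p m}m_t\right)dx,
\]
and substitute $v_t=-(g(m)-H(v))_x+\epsi v_{xx}$ and $m_t=(mH'(v))_x+\epsi m_{xx}$. This splits the integrand into a first-order part and an $\epsi$-part. For the first-order part, I would recognize that $\frac{\p E}{\p v}\,(-(g(m)-H(v))_x)+\frac{\p E}{\p m}\,(mH'(v))_x$ is exactly $-(E(v,m))_t$ along the inviscid flow rewritten via the entropy-flux relations; more precisely, since $(E,Q)$ is an entropy/entropy-flux pair for \eqref{eq: foff mfg system2}, this combination equals $-(Q(v,m))_x$ up to the viscous terms, and $\int_{\Tt}(Q)_x\,dx=0$ by periodicity. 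Thus the entire first-order contribution integrates to zero, which is the mechanism by which the entropy condition \eqref{eq: cons E condition 1} enters.

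For the remaining viscous terms I would treat $\epsi\int_{\Tt}\left(\frac{\p E}{\p v}v_{xx}+\frac{\p E}{\p m}m_{xx}\right)dx$ and integrate by parts once. This moves an $x$-derivative onto $\nabla E$, producing $-\epsi\int_{\Tt}\left(\frac{\p^2 E}{\p v^2}v_x^2+2\frac{\p^2 E}{\p v\p m}v_x m_x+\frac{\p^2 E}{\p m^2}m_x^2\right)dx$, which is exactly $-\epsi\int_{\Tt}(v_x,m_x)^T D^2E(v,m)(v_x,m_x)\,dx$, the stated right-hand side \eqref{eq: time derivative of entropy}. The periodicity of $v$ and $m$ again kills the boundary term in this integration by parts.

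I anticipate the only real care needed is in the bookkeeping of the first-order part: one must confirm that the flux divergence genuinely telescopes into $(Q)_x$ and does not leave a residual term. The cleanest way is to note that, for a smooth inviscid solution, $(E)_t+(Q)_x=0$ is precisely the identity defining an entropy pair, so subtracting the inviscid evolution of $E$ isolates the viscous terms exactly. The differentiability hypotheses ($v,m\in C^2$ and $E\in C^2$) guarantee all the chain-rule and integration-by-parts manipulations are legitimate, and periodicity on $\Tt$ disposes of every boundary contribution, so there is no genuine analytic obstacle — the statement is, as the authors remark, a straightforward computation.
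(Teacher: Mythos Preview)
Your proposal is correct and is precisely the ``straightforward computation'' the paper alludes to (the paper itself supplies no proof beyond that remark). One caveat worth flagging: your assertion that the first-order flux in \eqref{eq: parabolic ffmfg} coincides with that of \eqref{eq: foff mfg system2} is only literally true after correcting the evident sign typo in the first line of \eqref{eq: parabolic ffmfg} (differentiating \eqref{ffmfg} gives $v_t-(g(m)-H(v))_x=\epsi v_{xx}$, matching \eqref{eq: foff mfg system2}); with that correction your argument---flux part collapses to $-(Q)_x$ and integrates to zero by periodicity, viscous part yields the Hessian form after one integration by parts---goes through verbatim.
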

Now, let $P(m)$ be as in \eqref{Pofm}. Note that $P$ is strictly convex when $g$ is strictly increasing. 
\begin{lem}
        Let $\epsi>0$. Suppose $v,m \in  C^2(\Tt \times (0,+\infty))\cap C(\Tt \times (0,+\infty))$ solve  \eqref{eq: parabolic ffmfg} and satisfy \eqref{eq: fixedmeanvalues}. Then, for all $t\geq 0$, we have that        \begin{equation}\label{eq: fixedmeanvalues t}
        \int\limits_{\Tt} v(x,t)dx=0, \quad \int\limits_{\Tt} m(x,t)dx=1.
        \end{equation}
        Furthermore, if $g$ is increasing, we have that
        \begin{align}\label{eq: time derivative of convex solutions}
        &\frac{d}{dt}\int_{\Tt} H(v(x,t))+P(m(x,t)) dx\\&\qquad = -\epsi \int_{\Tt} H''(v(x,t))v_x^2(x,t)+P''(m(x,t))m_x^2(x,t)dx\leq 0.
        \end{align}
\end{lem}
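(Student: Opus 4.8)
The plan is to treat the two assertions separately, each following by direct computation from the divergence structure of \eqref{eq: parabolic ffmfg}.

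First I would establish the conservation of the mean values \eqref{eq: fixedmeanvalues t}. Integrating each equation of \eqref{eq: parabolic ffmfg} over $\Tt$ and differentiating under the integral sign (legitimate since $v,m\in C^2$ and $\Tt$ is compact), every term on the right-hand side is a spatial derivative: the flux terms $(g(m)-H(v))_x$ and $(mH'(v))_x$, together with the viscous terms $\epsi v_{xx}$ and $\epsi m_{xx}$. Because all functions are periodic in $x$, these integrate to zero, so
\[
\frac{d}{dt}\int_\Tt v\,\dx=0,\qquad \frac{d}{dt}\int_\Tt m\,\dx=0.
\]
Hence the two spatial averages are constant in time and retain the initial values prescribed in \eqref{eq: fixedmeanvalues}, which gives \eqref{eq: fixedmeanvalues t}.

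For the entropy dissipation \eqref{eq: time derivative of convex solutions}, I would invoke the preceding lemma with the particular entropy $E(v,m)=H(v)+P(m)$, where $P$ is defined by \eqref{Pofm}. As noted after \eqref{eq: cons E condition 1}, this $E$ solves the entropy condition; it is $C^2$ because $H\in C^2$ and $P''=g'/m$ is continuous (using $m>0$). The key simplification is that this entropy is separated, so its mixed partial derivative vanishes and its Hessian is diagonal,
\[
D^2E(v,m)=\begin{pmatrix} H''(v) & 0 \\ 0 & P''(m)\end{pmatrix}.
\]
Substituting into \eqref{eq: time derivative of entropy} collapses the quadratic form to $H''(v)v_x^2+P''(m)m_x^2$, which is exactly the claimed right-hand side. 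The sign is then immediate: strict convexity of $H$ gives $H''(v)>0$, while $g$ increasing together with $m>0$ gives $P''(m)=g'(m)/m>0$, so the integrand is nonnegative and the whole expression, carrying the factor $-\epsi<0$, is $\leq 0$.

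I expect the only delicate point to be the positivity of $m$, which is implicit both in the probability-density interpretation and in the well-definedness of $P''=g'/m$. This is the main (if minor) obstacle: one should argue $m(\cdot,t)>0$ for all $t$, for instance by the strong maximum principle applied to the linear parabolic equation satisfied by $m$ (viewing the coefficients as frozen along the given solution) with a positive initial density, so that $P''(m)$ stays finite and positive throughout and the entropy $H+P$ remains a bona fide $C^2$ convex entropy.
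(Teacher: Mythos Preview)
Your proposal is correct and follows essentially the same route as the paper. The only cosmetic difference is in the first part: you integrate \eqref{eq: parabolic ffmfg} directly and use periodicity, whereas the paper phrases this as applying the preceding lemma to the linear entropies $E_0(v,m)=v$ and $E_1(v,m)=m$ (whose Hessians vanish, so \eqref{eq: time derivative of entropy} reduces to exactly your direct computation); for the second part both arguments invoke \eqref{eq: time derivative of entropy} with the separated entropy $E=H(v)+P(m)$ and its diagonal Hessian.
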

\begin{proof}
In Section \ref{ss: ffmfg}, we observed that $E_0(v,m)=v, E_1(v,m)=m,$ and $E_2(v,m):=H(v)+P(m)$ are entropies for \eqref{eq: foff mfg system2}. Hence, we apply \eqref{eq: time derivative of entropy} to $E_0,E_1$ and $E_2$ and obtain \eqref{eq: fixedmeanvalues t} and \eqref{eq: time derivative of convex solutions}.
The inequality in \eqref{eq: time derivative of convex solutions} follows from the convexity of $H$ and $P$.
\end{proof}

\subsection{Poincar\'{e}-type inequality}

To establish the long-time convergence, we need the following Poincar\'{e}-type inequality:
\begin{theorem}\label{thm: poincare}
        Let $I\subset \Rr$ be an open interval and $\Phi \in C^2(I)$ be a strictly convex function. Furthermore, let $\Psi \in C^1(I)$ be such that
        \begin{equation}\label{eq: psi}
                \Psi'(s)=\sqrt{\Phi''(s)},\quad s\in I.
        \end{equation}
        Then, for every $f:\Tt\to I$,   $f \in C^1(\Tt)$, we have
                \begin{equation}\label{ineq: poincare 1}
                \int\limits_{\Tt} \Phi(f(x))dx - \Phi\left(\int\limits_{\Tt} f(x)dx\right)\leq C_{\Phi}(a,b) \int\limits_{\Tt} \Phi''(f(x))f'(x)^2dx,
                \end{equation}
                where $a=\min \limits_{\Tt}f$, $b=\max \limits_{\Tt}f,$ and                 \begin{equation}\label{eq: cab}
                        C_{\Phi}(a,b)=\frac{\Phi(a)+\Phi(b)-2\Phi\left(\frac{a+b}{2}\right)}{(\Psi(b)-\Psi(a))^2}.
                \end{equation}
             Moreover, if
                \begin{equation}\label{eq: c}
                        C_{\Phi}=\sup \limits_{a,b \in I} C_{\Phi}(a,b)<\infty,
                \end{equation}
                then
                \begin{equation}\label{ineq: poincare 2}
                \int\limits_{\Tt} \Phi(f(x))dx - \Phi\left(\int\limits_{\Tt} f(x)dx\right)\leq C_{\Phi} \int\limits_{\Tt} \Phi''(f(x))f'(x)^2dx
                \end{equation}
                for all $f:\Tt\to I$,  $f\in C^1(\Tt)$.
\end{theorem}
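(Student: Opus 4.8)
The plan is to establish the pointwise estimate \eqref{ineq: poincare 1} first and then deduce \eqref{ineq: poincare 2} as an immediate corollary. I may assume $f$ is non-constant, so that $a<b$; otherwise both sides of \eqref{ineq: poincare 1} vanish and there is nothing to prove. Since $|\Tt|=1$ and $\Phi$ is convex, Jensen's inequality shows that the left-hand side of \eqref{ineq: poincare 1} is nonnegative, so we are bounding a genuinely nonnegative quantity from above. The central device is the change of variables $g:=\Psi\circ f$. Because $\Psi'=\sqrt{\Phi''}$, the chain rule gives $(g')^2=\Phi''(f)(f')^2$, so the right-hand integrand is exactly $(g')^2$; moreover $\Psi$ is strictly increasing, whence $\max_\Tt g=\Psi(b)$ and $\min_\Tt g=\Psi(a)$. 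The proof then reduces to combining two independent estimates: an upper bound
\[
\int_\Tt \Phi(f)\,dx-\Phi\Big(\int_\Tt f\,dx\Big)\le \Phi(a)+\Phi(b)-2\Phi\Big(\tfrac{a+b}{2}\Big),
\]
and a lower bound $\int_\Tt(g')^2\,dx\ge(\Psi(b)-\Psi(a))^2$.

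For the upper bound I would set $\bar f=\int_\Tt f\,dx\in[a,b]$ and let $L$ be the affine chord of $\Phi$ joining $(a,\Phi(a))$ and $(b,\Phi(b))$. Convexity gives $\Phi(f(x))\le L(f(x))$ pointwise; since $L$ is affine and $|\Tt|=1$, integration yields $\int_\Tt\Phi(f)\,dx\le L(\bar f)$, hence
\[
\int_\Tt\Phi(f)\,dx-\Phi(\bar f)\le L(\bar f)-\Phi(\bar f)\le \max_{s\in[a,b]}\big(L(s)-\Phi(s)\big).
\]
The crux of the argument, and the step I expect to require the most care, is the elementary convexity lemma that the maximal gap between a convex function and its chord is at most twice the gap at the midpoint. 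Setting $h:=\Phi-L$, which is convex with $h(a)=h(b)=0$ and $h\le0$, one shows $\max_{[a,b]}|h|\le 2\,|h(\tfrac{a+b}{2})|$: locate the minimizer $s^\ast$ of $h$, assume $s^\ast\le\frac{a+b}{2}$ without loss of generality, and compare $h(\frac{a+b}{2})$ to the chord of $h$ over $[s^\ast,b]$; since $s^\ast\ge a$ forces $b-s^\ast\le b-a$, one obtains $|h(\frac{a+b}{2})|\ge\frac12\max_{[a,b]}|h|$. As $2\,|h(\frac{a+b}{2})|=\Phi(a)+\Phi(b)-2\Phi(\frac{a+b}{2})$, the claimed upper bound follows.

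For the lower bound I would use compactness of $\Tt$ to choose $x_1,x_2$ with $g(x_1)=\Psi(a)$ and $g(x_2)=\Psi(b)$, join them by an arc $\gamma\subset\Tt$ of length $\ell\le|\Tt|=1$, and apply the fundamental theorem of calculus together with Cauchy--Schwarz along $\gamma$:
\[
\Psi(b)-\Psi(a)=g(x_2)-g(x_1)=\int_\gamma g'\le \sqrt{\ell}\,\Big(\int_\gamma (g')^2\Big)^{1/2}\le\Big(\int_\Tt(g')^2\Big)^{1/2}.
\]
Squaring and recalling $(g')^2=\Phi''(f)(f')^2$ gives $(\Psi(b)-\Psi(a))^2\le\int_\Tt\Phi''(f)\,f'^2\,dx$.

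Finally I would combine the two estimates. Writing $N=\Phi(a)+\Phi(b)-2\Phi(\frac{a+b}{2})>0$ (positive by strict convexity and $a<b$), the definition \eqref{eq: cab} reads $N=C_\Phi(a,b)\,(\Psi(b)-\Psi(a))^2$, so that
\[
\int_\Tt\Phi(f)\,dx-\Phi(\bar f)\le N=C_\Phi(a,b)\,(\Psi(b)-\Psi(a))^2\le C_\Phi(a,b)\int_\Tt\Phi''(f)\,f'^2\,dx,
\]
which is \eqref{ineq: poincare 1}. For \eqref{ineq: poincare 2}, note that $C_\Phi(a,b)\ge0$ and the right-hand integral is nonnegative, so the bound $C_\Phi(a,b)\le C_\Phi$ immediately upgrades \eqref{ineq: poincare 1} to the uniform estimate, completing the proof.
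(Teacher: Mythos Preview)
Your proposal is correct and follows essentially the same strategy as the paper: split the estimate into the ``reverse-Jensen'' upper bound $\int_\Tt\Phi(f)-\Phi(\bar f)\le \Phi(a)+\Phi(b)-2\Phi(\tfrac{a+b}{2})$ via the chord $L$ and the midpoint-gap lemma, and the lower bound $\int_\Tt\Phi''(f)f'^2\ge(\Psi(b)-\Psi(a))^2$ via Cauchy--Schwarz applied to $(\Psi\circ f)'$. The only cosmetic differences are that you prove the midpoint-gap lemma by locating the minimizer of $h=\Phi-L$ and comparing with a chord from it, whereas the paper writes $\tfrac{a+b}{2}$ directly as a convex combination of $s$ and an endpoint, and you apply Cauchy--Schwarz on a subarc before enlarging to $\Tt$ while the paper does it in the reverse order; neither variation changes the argument.
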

\begin{proof}
        Because \eqref{ineq: poincare 2} is an immediate consequence of \eqref{ineq: poincare 1},  we only need to prove the latter inequality. For that, next, we show that for every $f:\Tt\to I$,  $f\in C^1(\Tt)$, such that $a=\min \limits_{\Tt}f$ and $b=\max \limits_{\Tt} f$, we have         \begin{equation}\label{ineq: reversejensen}
                \int\limits_{\Tt} \Phi(f(x))dx - \Phi\left(\int\limits_{\Tt} f(x)dx\right)\leq \Phi(a)+\Phi(b)-2\Phi\left(\frac{a+b}{2}\right)
        \end{equation}
        and
        \begin{equation}\label{ineq: nondegenracy}
                \int\limits_{\Tt} \Phi''(f(x))f'(x)^2dx \geq (\Psi(b)-\Psi(a))^2.
        \end{equation}
        If  $a=b,$ $f$ is constant and the result is trivial.  Thus,  we assume $a<b$. Let $A=\int\limits_{\Tt} f(x)dx$. We have that $a\leq A\leq b$. Furthermore, because $\Phi$ is convex, we have that
        \[\Phi(s)\leq \frac{b-s}{b-a} \Phi(a)+\frac{s-a}{b-a}\Phi(b)=:L(s),\quad \forall\ s\in[a,b]. 
        \]
Now, we observe that $\Phi(s)-L(s)$ is a convex function that vanishes at $s=a,b$. Accordingly, for $a<s<\frac{a+b}{2}$ there exists $\lambda>\frac 1 2$ such that 
\[
\frac{a+b}{2}=\lambda s +(1-\lambda) b.
\] 
Therefore, 
\begin{align}
\label{mi}
\Phi\left(\frac{a+b}{2}\right)-L\left(\frac{a+b}{2}\right)&\leq  \lambda (\Phi(s)-L(s))+(1-\lambda) (\Phi(b)-L(b))\\
&=\lambda (\Phi(s)-L(s)).\notag\end{align}
Arguing in a similar way for   $\frac{a+b}{2}\leq s<b$, we see that \eqref{mi} also holds for some $\lambda>\frac 1 2$.       
Consequently, we have

\begin{align*}
L(s)-\Phi(s)&\leq 2\left(L\left(\frac{a+b}{2}\right)-\Phi\left(\frac{a+b}{2}\right)\right)\\&=\Phi(a)+\Phi(b)-2\Phi\left(\frac{a+b}{2}\right)
\end{align*}
for all $s\in [a, b]$. Hence, we get
\[
\int\limits_{\Tt} \Phi(f(x))dx \leq \int\limits_{\Tt} L(f(x))dx=L\left(\int\limits_{\Tt}f(x)dx\right)=L(A).
\]
        Therefore,         \[\int\limits_{\Tt} \Phi(f(x))dx - \Phi\left(\int\limits_{\Tt} f(x)dx\right)\leq L(A)-\Phi(A)\leq \Phi(a)+\Phi(b)-2\Phi\left(\frac{a+b}{2}\right).
        \]
        Suppose $f(x_0)=a$ and $f(x_1)=b$. Then, we have that
        \begin{align*}
                \int\limits_{\Tt} \Phi''(f(x))f'(x)^2dx&=\int\limits_{\Tt} \left(\frac{d\Psi(f(x))}{dx}\right)^2dx\geq \left(\int\limits_{\Tt} \left|\frac{d\Psi(f(x))}{dx}\right|dx\right)^2\\
                &\geq \left(\int_{x_0}^{x_1} \left|\frac{d\Psi(f(x))}{dx}\right|dx\right)^2\geq \left|\int_{x_0}^{x_1} \frac{d\Psi(f(x))}{dx}dx\right|^2\\
                &=(\Psi(b)-\Psi(a))^2.
        \end{align*}
\end{proof}
Next, we present some  convex functions $\Phi$ for which \eqref{eq: c} holds.
\begin{pro}
        Let $I$ and $\Phi \in C^2(I)$ be one of the following:
        \begin{enumerate}
                \item $I=(0,\infty),\ \Phi(s)=s^p$, where $p>1$.
                \item $I=(0,\infty),\ \Phi(s)=s^p$, where $p<0$.
                \item $I=(0,\infty),\ \Phi(s)=-s^p$, where $0<p<1$.
                \item $I=(0,\infty),\ \Phi(s)=-\ln s$.
                \item $I=(0,\infty),\ \Phi(s)=s\ln s$.
                \item $I=\Rr,\ \Phi(s)=s^{2n}$, where $n\in \mathbb{N}$.
                \item $I=\Rr,\ \Phi(s)=e^{\alpha s}$, where $\alpha \in \Rr$.
        \end{enumerate}
        Then, $C_{\Phi}$ defined in \eqref{eq: c} is finite. Consequently,  \eqref{ineq: poincare 2} holds.
\end{pro}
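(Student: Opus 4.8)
The plan is to handle all seven cases through two universal reductions, after first disposing of the apparent singularity on the diagonal $a=b$. Writing $c=\frac{a+b}{2}$, a second-order Taylor expansion together with $\Psi'^2=\Phi''$ and strict convexity $\Phi''(c)>0$ gives
\[
\Phi(a)+\Phi(b)-2\Phi\left(\tfrac{a+b}{2}\right)=\tfrac14\Phi''(c)(b-a)^2+o((b-a)^2),\qquad (\Psi(b)-\Psi(a))^2=\Phi''(c)(b-a)^2+o((b-a)^2).
\]
Hence $C_\Phi(a,b)\to\frac14$ as $b\to a$ at every interior point, so $C_\Phi(\cdot,\cdot)$ extends continuously across the diagonal. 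Consequently the supremum can only fail to be finite through degeneration of $(a,b)$ toward the boundary of $I$ (an endpoint, $\pm\infty$, or, in case (6), the zero of $\Phi''$), and the whole task reduces to controlling these boundary limits.

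Second, I would exploit the invariance of the quotient. In cases (1)--(5) (powers and logarithms on $(0,\infty)$) the quotient $C_\Phi(a,b)$ is invariant under the scaling $(a,b)\mapsto(\lambda a,\lambda b)$, $\lambda>0$: the numerator $\Phi(a)+\Phi(b)-2\Phi(\frac{a+b}{2})$ and the denominator $(\Psi(b)-\Psi(a))^2$ scale by the same power of $\lambda$, the affine pieces produced by scaling in the logarithmic cases ($-\ln\lambda$ for $\Phi=-\ln s$, a linear term for $\Phi=s\ln s$) dropping out of the second difference and of $\Psi(b)-\Psi(a)$. Thus $C_\Phi(a,b)=g(r)$ depends only on $r=b/a$, and the symmetry $C_\Phi(a,b)=C_\Phi(b,a)$ forces $g(r)=g(1/r)$. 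It then suffices to verify that $g$ has finite limits as $r\to1$ (equal to $\frac14$, by the first step) and as $r\to\infty$; for each listed $\Phi$ these are elementary one-variable limits (for instance a finite constant for $\Phi=s^p$, and $0$ for $\Phi=-\ln s$). A continuous function on $(0,\infty)$ with finite limits at $1$ and $\infty$—hence, by $g(r)=g(1/r)$, at $0$ as well—is bounded, so $C_\Phi<\infty$. Case (7), $\Phi=e^{\alpha s}$ on $\Rr$, is analogous with translation replacing scaling: since $\Phi''(s+s_0)=e^{\alpha s_0}\Phi''(s)$, the factor $e^{\alpha s_0}$ cancels between the numerator and the squared denominator, so $C_\Phi$ depends only on $h=\frac{b-a}{2}$, with both limits $h\to0$ and $h\to\infty$ finite.

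The delicate case is (6), $\Phi(s)=s^{2n}$ on $I=\Rr$, where the second difference must be controlled across intervals straddling $0$ and $\Phi''$ vanishes there for $n\ge2$. Here I would again use degree-zero homogeneity to restrict to the compact slice $\{\max(|a|,|b|)=1,\ a\le b\}$, which every ray from the origin meets exactly once. Since $\Phi$ is even and $\Psi$ is odd, the purely positive and purely negative subfamilies are interchanged by $(a,b)\mapsto(-b,-a)$ and reduce to the restriction of the $s^{2n}$ computation to $(0,\infty)$, handled as in case (1). The straddling subfamily reduces, after scaling and this evenness symmetry, to the one-parameter family $a=-t$, $b=1$ with $t\in[0,1]$; on this closed interval the numerator and the strictly positive denominator are continuous, the diagonal-type endpoint giving a finite $\frac14$-type value and the symmetric endpoint giving $C_\Phi(-1,1)=\frac{n}{4(2n-1)}$. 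Thus $C_\Phi$ is the supremum of a continuous function over a compact parameter set, hence finite. The main obstacle throughout is organizing case (6) so that the homogeneity reduction yields a genuinely compact parameter domain, and checking that the vanishing of $\Phi''$ at the origin—which occurs only along the excluded ray $a=b=0$ and along straddling intervals, where homogeneity freezes the quotient at $C_\Phi(-1,1)$—never forces a blow-up.
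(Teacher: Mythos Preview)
The paper does not actually prove this proposition: it states that ``the proof of the preceding result is elementary though tedious, and we omit it here.'' Your proposal therefore supplies what the paper leaves out, and the organizing idea---using the scaling (cases (1)--(6)) or translation (case (7)) homogeneity of $C_\Phi(a,b)$ to collapse the two-parameter supremum to a one-variable problem, then checking the boundary limits---is correct and is precisely the kind of structure that turns a tedious case-by-case check into a short argument. The diagonal Taylor step and the invariances you claim all check out; in particular, for $\Phi=e^{\alpha s}$ the quotient is in fact identically $\tfrac14$, not merely bounded.

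Two small expository points in your treatment of case (6) are worth tightening, though neither is a genuine gap. First, in the straddling family $a=-t$, $b=1$, $t\in[0,1]$, neither endpoint is a diagonal point: $t=0$ is the interface with the purely positive family (the $r\to\infty$ limit of case (1) with $p=2n$), and $t=1$ is the symmetric configuration; your phrase ``diagonal-type endpoint giving a finite $\tfrac14$-type value'' is misleading. What matters, and what you should say, is simply that on $[0,1]$ the numerator and the strictly positive denominator are continuous, hence the quotient is bounded there. Second, the remark that ``homogeneity freezes the quotient at $C_\Phi(-1,1)$'' overstates things: homogeneity freezes the quotient along each ray through the origin, so the only potentially bad ray is the diagonal $a=b$; the point $(0,0)$ is then harmless because every other ray already has its value fixed on the compact slice $\max(|a|,|b|)=1$.
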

\begin{proof}
The proof of the preceding result is elementary though tedious, and we omit it here. 
\end{proof}

\subsection{Stability of Jensen's inequality} The proof of the long-time convergence of the solutions of \eqref{eq: parabolic ffmfg} is based on the following stability property of Jensen's inequality: 
\begin{theorem}\label{thm: jensenstability}
Let $I\subset \Rr$ be an open interval, not necessarily bounded, and $\Phi \in C(I)$ a strictly convex function. Furthermore, let $A \in I$  and  $f_t:\Tt\to I$, $\{f_t\}_{t>0} \subset C(\Tt)$, be such that,
for all $t\geq 0$, \[\int\limits_{\Tt} f_t(x)dx=A
\]
and
\[\lim\limits_{t \to \infty} \int\limits_{\Tt} \Phi(f_t(x))dx-\Phi(A) =0.
\]
Then,\[\lim\limits_{t\to \infty} \int\limits_{\Tt}|f_t(x)-A|dx=0.
\]
\end{theorem}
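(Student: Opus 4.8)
The plan is to reduce the statement to a quantitative coercivity estimate for the \emph{convexity defect} of $\Phi$ at $A$, and then to split $\Tt$ into a bulk region and a tail region. Since $\Phi$ is convex on the open interval $I$, it admits a supporting line at $A$: there is $c\in\Rr$ with $\ell(s):=\Phi(A)+c(s-A)\leq\Phi(s)$ for all $s\in I$, and strict convexity guarantees that equality holds only at $s=A$. Set $g(s):=\Phi(s)-\ell(s)$, so that $g$ is convex, $g\geq 0$, and $g(s)=0$ if and only if $s=A$. Because $\int_{\Tt}f_t\,dx=A$ and $\Tt$ has unit measure, the linear part integrates to a constant: $\int_{\Tt}\ell(f_t)\,dx=\Phi(A)$. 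Hence the hypothesis becomes $\int_{\Tt}g(f_t)\,dx=\int_{\Tt}\Phi(f_t)\,dx-\Phi(A)\to 0$ as $t\to\infty$, and it remains to show that $\int_{\Tt}g(f_t)\,dx\to 0$ forces $\int_{\Tt}|f_t-A|\,dx\to 0$.

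Next I fix points $a_-<A<a_+$ in $I$ and separate $\Tt$ according to whether $f_t$ lies in the window $(a_-,a_+)$ or outside it. On the tail $\{f_t\geq a_+\}\cup\{f_t\leq a_-\}$ I exploit the monotonicity of the chord slopes of the convex function $g$: since $g(A)=0$, the slope $(g(s)-g(A))/(s-A)$ is nondecreasing, which yields a \emph{linear} lower bound $g(s)\geq c_0\,|s-A|$ valid for all $s$ outside $(a_-,a_+)$, with $c_0:=\min\{g(a_+)/(a_+-A),\,g(a_-)/(A-a_-)\}>0$. Consequently $\int_{\{f_t\notin(a_-,a_+)\}}|f_t-A|\,dx\leq c_0^{-1}\int_{\Tt}g(f_t)\,dx\to 0$. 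This is the step that prevents mass of $f_t$ from escaping to the endpoints of $I$ or to infinity, which is the only place where the possible unboundedness of $I$ and $\Phi$ enters.

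On the bulk, where $f_t\in(a_-,a_+)$, I control the measure of the set where $f_t$ stays away from $A$. Given $\epsilon>0$, the set $K_\epsilon:=[a_-,A-\epsilon]\cup[A+\epsilon,a_+]$ is compact and does not contain $A$, so $\eta_\epsilon:=\min_{K_\epsilon}g>0$. Markov's inequality then gives $\mathrm{meas}\{f_t\in K_\epsilon\}\leq\eta_\epsilon^{-1}\int_{\Tt}g(f_t)\,dx\to 0$. Splitting the bulk integral into the parts $\{|f_t-A|<\epsilon\}$ and $\{f_t\in K_\epsilon\}$, and bounding $|f_t-A|$ by $\epsilon$ and by $\max(a_+-A,\,A-a_-)$ respectively, I obtain $\limsup_{t\to\infty}\int_{\{f_t\in(a_-,a_+)\}}|f_t-A|\,dx\leq\epsilon$. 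Combining the tail and bulk bounds yields $\limsup_{t\to\infty}\int_{\Tt}|f_t-A|\,dx\leq\epsilon$, and letting $\epsilon\to 0$ completes the proof.

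The main obstacle is precisely the lack of any boundedness or growth assumption on $I$ and $\Phi$: a priori the values of $f_t$ could concentrate near $\partial I$ (where $\Phi$ may blow up, as for $\Phi=-\ln s$) or run off to infinity while keeping $\int_{\Tt}g(f_t)\,dx$ small. The convexity of $g$ is what rules this out, since it forces at-least-linear growth of the defect away from $A$; strict convexity (ensuring $g>0$ off $A$) is what makes the bulk estimate nondegenerate. A secondary technical point is that $\Phi$ is only assumed continuous and convex, so I use a subgradient supporting line rather than a tangent, and no differentiability of $\Phi$ is needed.
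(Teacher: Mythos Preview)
Your argument is correct and takes a genuinely different route from the paper. The paper first establishes a Jensen-type lemma bounding the Jensen gap from below by a two-point expression $p\,\Phi(A_1)+q\,\Phi(A_2)-\Phi(A)$, where $p,q$ are the measures of $\{f<A\}$ and $\{f\geq A\}$ and $A_1,A_2$ the corresponding conditional averages; it then argues by contradiction, applying the linear lower bound on the defect $\Phi(s)-\Phi(A)-\alpha(s-A)$ (with $\alpha\in\partial^-\Phi(A)$) only at the two points $A_1^{t_n},A_2^{t_n}$. You bypass the two-point reduction entirely and work with the same defect $g$ directly on $f_t$, splitting $\Tt$ into a tail region---where convexity of $g$ with $g(A)=0$ yields $g(s)\geq c_0|s-A|$---and a bounded window, where Markov's inequality together with compactness controls the measure of $\{|f_t-A|\geq\epsi\}$. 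Both proofs ultimately rest on the same key fact (at-least-linear growth of $g$ away from $A$), but your route is more elementary in that it needs no auxiliary lemma, while the paper's conditional-average decomposition makes the $L^1$ deviation $\gamma(f_t)=\tfrac12\int_{\Tt}|f_t-A|\,dx$ appear explicitly and is reused verbatim in the proof of the subsequent theorem, where the averages $A_t$ are allowed to vary.
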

\begin{remark}
        Note that we do not impose uniform $L^{\infty}$ bounds on the family $\{f_t\}_{t>0}$.
\end{remark}
Before proving Theorem \ref{thm: jensenstability}, we need the following technical lemma.
We recall that $\mathcal{L}^1$ denotes the one-dimensional Lebesgue measure. 

\begin{lemma}
Let $I\subset \Rr$ be some interval and $\Phi \in C(I)$ a convex function. Then, for every $f\in C(I),$ we have that
\begin{equation}\label{eq: JensenStability}
\int\limits_{\Tt} \Phi(f(x))dx-\Phi\left(\int\limits_{\Tt} f(x) dx\right) \geq p \Phi(A_1)+ q \Phi(A_2)-(p+q) \Phi(A)\geq 0,
\end{equation}
where
\begin{equation}
\label{eq: jensenstability_suppl}
A=\int\limits_{\Tt} f(x) dx,\quad p=\mathcal{L}^1(\{f< A\}),\quad q=\mathcal{L}^1(\{f\geq  A\}),
\end{equation}
\begin{equation}
\label{eq: jensenstability_supplB}
A_1=\fint\limits_{f<A} f(x)dx= A- \frac{\gamma(f)}{p},\quad A_2= \fint\limits_{f\geq A} f(x)dx= A+ \frac{\gamma(f)}{q},
\end{equation}
and
\begin{equation}
\label{eq: jensenstability_supplC}
\gamma(f)=\int\limits_{\Tt} (f(x)-A)^- dx=\int\limits_{\Tt} (f(x)-A)^+ dx=\frac{1}{2}\int\limits_{\Tt} |f(x)-A| dx.
\end{equation}
\end{lemma}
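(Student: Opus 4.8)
The plan is to split $\Tt$ into the two complementary sets $\{f<A\}$ and $\{f\geq A\}$ and to invoke Jensen's inequality twice: once \emph{inside} each piece, which yields the first inequality in \eqref{eq: JensenStability}, and once \emph{between} the two conditional averages $A_1$ and $A_2$, which yields the second. First I would dispose of the degenerate case $f\equiv A$: then $\gamma(f)=0$ and every term in \eqref{eq: JensenStability} vanishes, so the chain reads $0\geq 0\geq 0$. If $f\not\equiv A$, then $\gamma(f)>0$, and since $f$ is continuous with average $A$ its minimum lies strictly below $A$ and its maximum strictly above $A$ (otherwise $f\equiv A$); hence $\{f<A\}$ and $\{f>A\}$ are nonempty open sets, so the weights $p=\mathcal{L}^1(\{f<A\})$ and $q=\mathcal{L}^1(\{f\geq A\})$ are both strictly positive, making $A_1,A_2$ well defined. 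I would also record that $p+q=\mathcal{L}^1(\Tt)=1$.

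For the first inequality I would apply Jensen's inequality to $\Phi$ on the two probability spaces obtained by normalizing Lebesgue measure on $\{f<A\}$ and on $\{f\geq A\}$:
\[
\frac1p\int_{\{f<A\}}\Phi(f)\,dx\geq \Phi\Big(\tfrac1p\int_{\{f<A\}} f\,dx\Big)=\Phi(A_1),\qquad \frac1q\int_{\{f\geq A\}}\Phi(f)\,dx\geq\Phi(A_2),
\]
where the inner averages are identified with $A_1$ and $A_2$ precisely via \eqref{eq: jensenstability_supplB}. Multiplying these two estimates by $p$ and $q$ respectively and adding them over the partition of $\Tt$ gives $\int_\Tt\Phi(f)\,dx\geq p\Phi(A_1)+q\Phi(A_2)$, which is the first inequality once we subtract $(p+q)\Phi(A)=\Phi(A)$.

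For the second inequality I would note that $A$ is the $(p,q)$-convex combination of $A_1$ and $A_2$: directly from \eqref{eq: jensenstability_supplB},
\[
pA_1+qA_2=p\Big(A-\frac{\gamma(f)}{p}\Big)+q\Big(A+\frac{\gamma(f)}{q}\Big)=(p+q)A=A.
\]
Since $p,q\geq0$ and $p+q=1$, convexity of $\Phi$ then yields $p\Phi(A_1)+q\Phi(A_2)\geq\Phi(pA_1+qA_2)=\Phi(A)$, i.e. $p\Phi(A_1)+q\Phi(A_2)-(p+q)\Phi(A)\geq0$, which closes the chain.

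I do not expect a serious analytic obstacle here: the statement is really a "two-scale" use of Jensen's inequality, with the inner use measuring the spread within each half and the outer use comparing the two halves. The only steps requiring genuine care are the bookkeeping ones — verifying $p+q=1$ (so that $(p+q)\Phi(A)$ collapses to $\Phi(A)$ and so that $pA_1+qA_2=A$ is an honest convex combination) and confirming in the non-constant case that $p,q>0$, so that the conditional averages $A_1,A_2$, and hence the middle expression in \eqref{eq: JensenStability}, are meaningful.
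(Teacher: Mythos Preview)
Your proposal is correct and follows essentially the same route as the paper: split $\Tt$ into $\{f<A\}$ and $\{f\geq A\}$, apply Jensen's inequality on each piece to get the first inequality, and use the convex combination $pA_1+qA_2=A$ for the second. If anything, you are more thorough than the paper, which leaves the degenerate case $f\equiv A$ and the second inequality implicit.
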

\begin{proof}
By rearranging \eqref{eq: JensenStability} and observing that $p+q=1$,  we get the  inequality
\begin{align*}
        &\int\limits_{f<A} \Phi(f(x))dx+\int\limits_{f\geq A} \Phi(f(x))dx\\
        &\geq \mathcal{L}^1(\{f(x)< A\}) \Phi\left(\fint\limits_{f<A} f(x)dx\right)+\mathcal{L}^1(\{f(x)\geq  A\}) \Phi\left(\fint\limits_{f\geq A} f(x)dx\right).
\end{align*}
The result follows by observing that the preceding inequality is a consequence of  Jensen's inequality.
\end{proof}
Now, we are ready to prove Theorem \ref{thm: jensenstability}.
\begin{proof}[Proof of Theorem \eqref{thm: jensenstability}] Let $p_t,q_t,A_1^t,A_2^t$ and $\gamma_t:=\gamma(f_t)$ be as in \eqref{eq: jensenstability_suppl}-\eqref{eq: jensenstability_supplC} for $f=f_t$. From \eqref{eq: JensenStability}, we have that
\[p_t \Phi(A^t_1)+ q_t \Phi(A^t_2)-(p_t+q_t) \Phi(A) \to 0.
\]
By contradiction, we assume that $f_t$ does not converge to the common average value $A$. Then, without loss of generality,  we can assume that
 there exists $\varepsilon_0>0$
such that \[\gamma_{t_n} \geq \epsi_0>0
\]
for some sequence $t_n\to\infty$. Consequently,
\[|A^{t_n}_1-A|=\frac{\gamma_{t_n}}{p_{t_n}}\geq \epsi_0
\]
and
\[|A^{t_n}_2-A|=\frac{\gamma_{t_n}}{q_{t_n}}\geq \epsi_0.
\]
Because $\Phi$ is strictly convex, we have that
\begin{align*}
        k=\inf\limits_{|s-A| \geq \epsi_0} \frac{\Phi(s)-\Phi(A)-(s-A) \alpha}{|s-A|} = \min\limits_{|s-A| = \epsi_0} \frac{\Phi(s)-\Phi(A)-(s-A)\cdot \alpha}{|s-A|} >0
\end{align*}
for any $\alpha$ in the subdifferential $\partial^-\Phi(A)$. Therefore,
\begin{align*}
p_{t_n} \Phi(A^{t_n}_1)+ q_{t_n} \Phi(A^{t_n}_2)-(p_{t_n}+q_{t_n}) \Phi(A)&=p_{t_n}(\Phi(A^{t_n}_1)-\Phi(A)-(A^{t_n}_1-A) \alpha)\\
&+q_{t_n}(\Phi(A^{t_n}_2)-\Phi(A)-(A^{t_n}_2-A) \alpha) \\
&\geq k p_{t_n} |A^{t_n}_1-A|+k q_{t_n} |A^{t_n}_2-A|\\
&=k \gamma_{t_n}\geq k \epsi_0,
\end{align*}
which is a contradiction.
\end{proof}
If we have uniform $L^{\infty}$ bounds,  we have the following stronger stability property for Jensen's  inequality:
\begin{theorem}\label{thm: jensenstability strong}
Let $I\subset \Rr$ be some interval and $\Phi \in C(I)$ a strictly convex function. Furthermore, let $a<b$ be real numbers and consider a family of functions $f_t:\Tt\to I,$ $\{f_t\}_{t>0}\subset C(\Tt),$ such that
\[ a\leq f_t(x) \leq b,\quad\forall x \in \Tt,\quad\forall t>0,
\]
and
\[\lim\limits_{t \to \infty} \int\limits_{\Tt} \Phi(f_t(x))dx-\Phi\left(\int\limits_{\Tt} f_t(x) dx\right) =0.
\]
Then, we have that
\begin{equation}\label{eq: strong stability l1}
        \lim\limits_{t\to \infty} \int\limits_{\Tt}|f_t(x)-A_t|dx=0,
\end{equation}
where $A_t=\int\limits_{\Tt} f_t(x)dx$. Consequently,
\begin{equation}\label{eq: strong stability lp}
        \lim\limits_{t\to \infty} \int\limits_{\Tt}|f_t(x)-A_t|^pdx=0
\end{equation}
for all $p>1$.
\end{theorem}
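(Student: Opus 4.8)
The plan is to argue by contradiction, reusing the decomposition of the technical lemma (inequality \eqref{eq: JensenStability}), but now exploiting the uniform bound $a\le f_t\le b$ to compensate for the fact that the averages $A_t=\int_{\Tt}f_t\,dx$ are no longer a common constant, as they were in Theorem \ref{thm: jensenstability}. First I would note that, by \eqref{eq: jensenstability_supplC}, the conclusion \eqref{eq: strong stability l1} is equivalent to $\gamma_t:=\gamma(f_t)\to 0$, since $\int_{\Tt}|f_t-A_t|\,dx=2\gamma_t$. Applying \eqref{eq: JensenStability} with $f=f_t$, $A=A_t$ and writing $p_t,q_t,A_1^t,A_2^t$ for the associated quantities from \eqref{eq: jensenstability_suppl}--\eqref{eq: jensenstability_supplB}, the relation $p_t+q_t=1$ gives
\[
0\le D_t:=p_t\Phi(A_1^t)+q_t\Phi(A_2^t)-\Phi(A_t)\le \int_{\Tt}\Phi(f_t)\,dx-\Phi(A_t).
\]
The right-hand side tends to $0$ by hypothesis, so $D_t\to 0$, and the whole task reduces to showing that $D_t\to0$ forces $\gamma_t\to0$.

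Suppose instead that \eqref{eq: strong stability l1} fails. Then there are $\epsi_0>0$ and $t_n\to\infty$ with $2\gamma_{t_n}\ge\epsi_0$, and I would derive the contradiction $\liminf_n D_{t_n}>0$. This is where the $L^\infty$ bound enters, in three ways. Since $A_t,A_1^t,A_2^t\in[a,b]$, the identity $|A_1^{t_n}-A_{t_n}|=\gamma_{t_n}/p_{t_n}\le b-a$ forces $p_{t_n}\ge \epsi_0/(2(b-a))$, and likewise $q_{t_n}\ge \epsi_0/(2(b-a))$, so the weights stay bounded away from $0$; at the same time $p_{t_n}\le1$ gives $|A_i^{t_n}-A_{t_n}|\ge\gamma_{t_n}\ge\epsi_0/2$ for $i=1,2$; and all the points $A_t,A_1^t,A_2^t$ lie in the compact set $[a,b]$. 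Passing to a subsequence I may therefore assume $p_{t_n}\to p^*$, $q_{t_n}\to q^*$, $A_{t_n}\to A^*$, $A_1^{t_n}\to A_1^*$, $A_2^{t_n}\to A_2^*$, and in the limit $p^*,q^*>0$, $p^*+q^*=1$, $p^*A_1^*+q^*A_2^*=A^*$, together with $A_1^*\le A^*-\epsi_0/2$ and $A_2^*\ge A^*+\epsi_0/2$ (using $A_1^t<A_t<A_2^t$). In particular $A_1^*\ne A_2^*$ and $A^*$ is interior to $[A_1^*,A_2^*]$.

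Strict convexity of $\Phi$ then gives $p^*\Phi(A_1^*)+q^*\Phi(A_2^*)-\Phi(A^*)>0$, and by continuity of $\Phi$ one gets $D_{t_n}\to p^*\Phi(A_1^*)+q^*\Phi(A_2^*)-\Phi(A^*)>0$, contradicting $D_t\to0$. The single technical point to address is that $A_1^*$ or $A_2^*$ might a priori sit on $\partial I$; this is harmless, because $A^*$ lies strictly between them and is thus an interior point of $I$ (so $\Phi(A^*)$ is finite and $\Phi(A_{t_n})\to\Phi(A^*)$), while replacing $\Phi$ by its lower-semicontinuous convex extension only increases $p^*\Phi(A_1^*)+q^*\Phi(A_2^*)$, keeping the limiting gap strictly positive (possibly $+\infty$). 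I expect the main obstacle to be precisely this passage from a fixed average to varying averages: without the $L^\infty$ control one cannot prevent $p_t,q_t$ from collapsing to $0$ or the points $A_i^t$ from escaping, which is why the uniform subdifferential gap built around a single $A$ in Theorem \ref{thm: jensenstability} does not transfer directly, and the compactness argument above is needed instead.

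Finally, the consequence \eqref{eq: strong stability lp} is an immediate interpolation: for $p>1$ the bound $|f_t-A_t|\le b-a$ yields $\int_{\Tt}|f_t-A_t|^p\,dx\le (b-a)^{p-1}\int_{\Tt}|f_t-A_t|\,dx$, and the right-hand side tends to $0$ by \eqref{eq: strong stability l1}.
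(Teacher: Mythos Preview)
Your proposal is correct and follows essentially the same route as the paper: both argue by contradiction, apply the technical lemma \eqref{eq: JensenStability} to reduce to $D_t\to 0$, extract convergent subsequences of $(p_{t_n},q_{t_n},A_{t_n},A_1^{t_n},A_2^{t_n})$ by compactness in $[a,b]$, and exploit strict convexity at the limit. The only cosmetic difference is the order of steps: you use the bound $\gamma_{t_n}/p_{t_n}=|A_1^{t_n}-A_{t_n}|\le b-a$ \emph{before} passing to the limit to secure $p^*,q^*>0$ and then invoke strict convexity directly, while the paper passes to the limit first, lets strict convexity force $p=0$ or $q=0$, and only then invokes the same bound $\gamma_{t_n}\le(b-a)p_{t_n}$ to reach the contradiction; your treatment of the possible boundary case $A_i^*\in\partial I$ is a welcome extra care that the paper leaves implicit.
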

\begin{proof} Because $f_t$ is bounded, \eqref{eq: strong stability lp} follows from \eqref{eq: strong stability l1}. Therefore,  we only need to prove the latter. Let $p_t,q_t,A_1^t,A_2^t$ and $\gamma_t:=\gamma(f_t)$ be as in \eqref{eq: jensenstability_suppl}-\eqref{eq:
jensenstability_supplC}  for $f=f_t$.
By contradiction, we assume that there exists $\varepsilon_0>0$ such that 
\[\gamma_{t_n} \geq \epsi_0>0,
\]
for some $t_n\to \infty$. Accordingly, \[|A^{t_n}_1-A_{t_n}|=\frac{\gamma_{t_n}}{p_{t_n}}\geq \epsi_0,
\]
and
\[|A^{t_n}_2-A_{t_n}|=\frac{\gamma_{t_n}}{q_{t_n}}\geq \epsi_0.
\]
We have that $a\leq A_t,A^t_1,A^t_2\leq b. $ Therefore,  by compactness, we can assume that
\[A^{t_n}_1\to A_1,\quad A^{t_n}_2\to A_2,\quad A_{t_n}\to A,\quad p_{t_n}\to p,\quad q_{t_n} \to q, 
\]
extracting a subsequence if necessary. Moreover, we have that
\[|A_1-A|,|A_2-A| \geq \epsi_0>0.
\]
Furthermore, since $\Phi$ is continuous, we have that
\[p_{t_n} \Phi(A^{t_n}_1)+ q_{t_n} \Phi(A^{t_n}_2)-(p_{t_n}+q_{t_n}) \Phi(A_{t_n})\to p \Phi(A_1)+q\Phi(A_2)-(p+q)\Phi(A)=0, 
\]
using  \eqref{eq: JensenStability}. Note that
\[p_tA^t_1+q_tA^t_2=(p_t+q_t)A_t\]
for all $t>0$. Hence, 
\[pA_1+qA_2=(p+q)A.\]
Next, since $\Phi$ is strictly convex, we get that $p=0$ or $q=0$. But then $p_{t_n} \to 0$ or $q_{t_n} \to 0$. Suppose $p_{t_n} \to 0$. Then,
\[\epsi_0\leq \gamma_{t_n}= \int\limits_{f_{t_n}<A_{t_n}}|f_{t_n}(x)-A_{t_n}|dx\leq (b-a) \mathcal{L}^1(\{f_{t_n}<A_{t_n}\})=(b-a)p_{t_n},
\]
which is a contradiction. Similarly, we get a contradiction if $q_{t_n} \to 0$.
\end{proof}

\subsection{Parabolic forward-forward MFGs -- convergence}

Now, we are ready to prove the convergence result for  \eqref{eq: parabolic ffmfg}.
\begin{theorem}
        Let $H\in C^2(\Rr)$ be strictly convex and $g\in C^1\left((0,\infty)\right)$ be strictly increasing. Suppose that $C_{H},C_{P}<\infty$ (see \eqref{eq: c}), where $P$ is as in \eqref{Pofm}. Furthermore, let $v,m \in C^2(\Tt \times (0,+\infty))\cap C(\Tt \times
[0,+\infty)),\ m>0,$ solve \eqref{eq: parabolic ffmfg} and satisfy \eqref{eq: fixedmeanvalues}. Then, we have that
        \begin{equation}\label{eq: longtime l1}
                \lim \limits_{t\to \infty} \int\limits_{\Tt} |v(x,t)|dx=0,\quad \lim \limits_{t\to \infty} \int\limits_{\Tt} |m(x,t)-1|dx=0.
        \end{equation}
        Moreover, if         \[\sup\limits_{t\geq 0} \|v(\cdot,t)\|_{C(\Tt)} \quad \text{and}\quad \sup\limits_{t\geq 0} \|m(\cdot,t)\|_{C(\Tt)}<\infty,
        \]
        then, for all $1<p<\infty$,
        \begin{equation}\label{eq: longtime lp}
        \lim \limits_{t\to \infty} \int\limits_{\Tt} |v(x,t)|^pdx=0\quad\text{and} \quad  \lim \limits_{t\to \infty} \int\limits_{\Tt} |m(x,t)-1|^pdx=0.
        \end{equation}
\end{theorem}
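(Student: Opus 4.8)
The plan is to build a Lyapunov functional from the convex entropy $E_2(v,m)=H(v)+P(m)$, to show it decays to its Jensen minimum (in fact exponentially) by feeding the dissipation identity into the Poincar\'e inequality of Theorem \ref{thm: poincare}, and then to convert this into $L^1$ convergence of $v$ and $m-1$ via the Jensen-stability result of Theorem \ref{thm: jensenstability}. First I would set
\[
\mathcal{E}(t)=\int_{\Tt} H(v(x,t))+P(m(x,t))\,dx,\qquad \mathcal{D}(t)=\int_{\Tt} H''(v)v_x^2+P''(m)m_x^2\,dx,
\]
so that \eqref{eq: time derivative of convex solutions} reads $\mathcal{E}'(t)=-\epsi\,\mathcal{D}(t)\le 0$. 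Since $H$ is convex on $\Rr$ and $P$ is strictly convex on $(0,\infty)$ (because $g$ is strictly increasing), Jensen's inequality together with the conserved means \eqref{eq: fixedmeanvalues t} gives $\int_{\Tt}H(v)\,dx\ge H(0)$ and $\int_{\Tt}P(m)\,dx\ge P(1)$. Hence the shifted functional $\tilde{\mathcal{E}}(t)=\mathcal{E}(t)-H(0)-P(1)$ is a sum of two nonnegative terms.

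The key step is to close a differential inequality for $\tilde{\mathcal{E}}$. Applying Theorem \ref{thm: poincare} with $\Phi=H$, $f=v(\cdot,t)$ (mean $0$) and with $\Phi=P$, $f=m(\cdot,t)$ (mean $1$), and using the assumed finiteness $C_H,C_P<\infty$, I obtain at each fixed $t>0$
\[
\int_{\Tt}H(v)\,dx-H(0)\le C_H\int_{\Tt}H''(v)v_x^2\,dx,\qquad \int_{\Tt}P(m)\,dx-P(1)\le C_P\int_{\Tt}P''(m)m_x^2\,dx.
\]
Summing these and writing $C=\max(C_H,C_P)$ yields $\tilde{\mathcal{E}}(t)\le C\,\mathcal{D}(t)$, whence $\tilde{\mathcal{E}}'(t)=-\epsi\,\mathcal{D}(t)\le-\tfrac{\epsi}{C}\,\tilde{\mathcal{E}}(t)$. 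Gronwall's inequality then forces $\tilde{\mathcal{E}}(t)\le \tilde{\mathcal{E}}(t_0)e^{-\epsi(t-t_0)/C}\to 0$ as $t\to\infty$; since $\tilde{\mathcal{E}}$ splits into two nonnegative pieces, each piece vanishes, i.e. $\int_{\Tt}H(v)\,dx\to H(0)$ and $\int_{\Tt}P(m)\,dx\to P(1)$.

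Finally, I would feed this into Theorem \ref{thm: jensenstability}: with $\Phi=H$, $A=0$, $f_t=v(\cdot,t)$ it gives $\int_{\Tt}|v|\,dx\to 0$, and with $\Phi=P$, $A=1$, $f_t=m(\cdot,t)$ it gives $\int_{\Tt}|m-1|\,dx\to 0$, which is \eqref{eq: longtime l1}. Under the additional uniform bounds $\sup_t\|v(\cdot,t)\|_{C(\Tt)}\le M$ and $\sup_t\|m(\cdot,t)\|_{C(\Tt)}\le M'$, the $L^p$ statement \eqref{eq: longtime lp} follows at once from the interpolation estimates $\int_{\Tt}|v|^p\,dx\le M^{p-1}\int_{\Tt}|v|\,dx$ and $\int_{\Tt}|m-1|^p\,dx\le (M'+1)^{p-1}\int_{\Tt}|m-1|\,dx$ (alternatively, one may invoke Theorem \ref{thm: jensenstability strong}).

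The main obstacle is the closing of the differential inequality, i.e. checking that Theorem \ref{thm: poincare} applies uniformly in $t$. This requires $v(\cdot,t),m(\cdot,t)\in C^1(\Tt)$ (granted by the $C^2$ regularity), that $m(\cdot,t)$ takes values in the domain $(0,\infty)$ of $P$ (granted by $m>0$), and the finiteness of $C_H,C_P$ so that a single constant $C$ works for all $t$; only then does the pointwise-in-$t$ Poincar\'e estimate combine with the dissipation identity into a genuine Gronwall inequality. Everything else reduces to a standard Lyapunov argument and the two stability theorems already established.
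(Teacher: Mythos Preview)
Your proof is correct and follows essentially the same route as the paper: define the shifted entropy $I(t)=\int_{\Tt}H(v)+P(m)\,dx-H(0)-P(1)$, combine the dissipation identity with the Poincar\'e inequality (Theorem \ref{thm: poincare}) to obtain $I'(t)\le -\tfrac{\epsi}{C_0}I(t)$ with $C_0=\max(C_H,C_P)$, conclude exponential decay by Gronwall, and then invoke Theorem \ref{thm: jensenstability} for the $L^1$ convergence. Your handling of the $L^p$ statement via the elementary interpolation $\int|v|^p\le M^{p-1}\int|v|$ is in fact slightly more explicit than the paper, which leaves that step implicit.
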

\begin{proof}
        Let $C_0:=\max\{C_H,C_P\}$. Let
        \[I(t)=\int\limits_{\Tt} H(v(x,t))+P(m(x,t)) dx-H(0)-P(1).
        \]
        From \eqref{eq: fixedmeanvalues t},\ \eqref{eq: time derivative of convex solutions}, and \eqref{ineq: poincare 2}, we have that
        \begin{align*}
                \frac{dI(t)}{dt}&=-\epsi \int\limits_{\Tt} H''(v(x,t))v_x^2(x,t)+P''(m(x,t))m_x^2(x,t)dx\\
                &\leq - \frac{\epsi}{C_0} \left(\int\limits_{\Tt} H(v(x,t))dx-H(0)+\int\limits_{\Tt}P(m(x,t)) dx-P(1)\right)\\
                &=- \frac\epsi{C_0} I(t). 
        \end{align*}
        Therefore, we get
        \[I(t)\leq e^{-\frac\epsi{C_0} t} I(0)\qquad \forall t\geq 0,
        \]
        which yields
        \[\lim\limits_{t\to \infty}I(t)=0.
        \]
        Furthermore, by Jensen's inequality, we have that
        \[\int\limits_{\Tt} H(v(x,t))dx-H(0)\leq I(t),
        \]
        and
        \[\int\limits_{\Tt} P(m(x,t))dx-P(1)\leq I(t).
        \]
        Therefore, we get
        \[\lim\limits_{t\to \infty}\int\limits_{\Tt} H(v(x,t))dx-H(0)=\lim\limits_{t\to \infty}\int\limits_{\Tt} P(m(x,t))dx-P(1)=0,
        \]
        and we conclude using Theorem \ref{thm: jensenstability}.
\end{proof}

\def\polhk#1{\setbox0=\hbox{#1}{\ooalign{\hidewidth
			\lower1.5ex\hbox{`}\hidewidth\crcr\unhbox0}}} \def\cprime{$'$}

\end{document}